\documentclass[a4paper, twoside]{scrartcl}
\usepackage{etex}
\usepackage[a4paper, top=88pt, bottom=88pt, left=72pt, right=72pt, headsep=16pt, footskip=28pt]{geometry}
\usepackage{hyperref}
\hypersetup{
	breaklinks=true,
	colorlinks=true,
	linkcolor=blue,
	citecolor=blue,
	urlcolor=blue,
}
\usepackage{amsfonts, amssymb, amsmath, amsthm, amstext, amssymb, mathtools}
\usepackage{caption, subcaption, float}
\usepackage{color,xcolor,graphicx}
\usepackage{enumerate, enumitem, moreenum}
\usepackage[authoryear,sort,round]{natbib}  
\setlength{\bibsep}{0pt plus 0.3ex}

\newcommand*{\arXiv}[1]{\bgroup\color{blue}\href{https://arxiv.org/abs/#1}{arXiv:#1}\egroup}
\newcommand*{\doi}[1]{\bgroup\color{blue}\href{https://doi.org/#1}{doi:#1}\egroup}
\newcommand*{\email}[1]{\bgroup\color{blue}\href{mailto:#1}{#1}\egroup}
\renewcommand*{\url}[1]{\bgroup\color{blue}\href{#1}{#1}\egroup}
\usepackage{enumitem, moreenum}
\setlist[enumerate]{nosep}
\setlist[itemize]{nosep}
\usepackage{mleftright} \mleftright
\renewcommand{\qedsymbol}{$\blacksquare$}
\renewenvironment{proof}[1][\proofname]{\noindent{\bfseries\sffamily #1.} }{\hfill\qedsymbol\medskip}
\usepackage[labelfont={sf,bf}]{caption}
\DeclareCaptionLabelSeparator{figlabelsep}{\,\,\,}
\captionsetup{labelsep=figlabelsep}
\usepackage{scrlayer-scrpage, xhfill}
\automark[section]{section}
\setkomafont{pageheadfoot}{\normalcolor\sffamily}
\setkomafont{pagenumber}{\normalfont\normalsize\sffamily}
\clearpairofpagestyles
\let\oldtitle\title
\renewcommand{\title}[1]{\oldtitle{#1}\newcommand{\theshorttitle}{#1}}
\newcommand{\shorttitle}[1]{\renewcommand{\theshorttitle}{#1}}
\let\oldauthor\author
\renewcommand{\author}[1]{\oldauthor{#1}\newcommand{\theshortauthor}{#1}}
\newcommand{\shortauthor}[1]{\renewcommand{\theshortauthor}{#1}}
\cohead{\xrfill[0.525ex]{0.6pt}~\theshorttitle~\xrfill[0.525ex]{0.6pt}}
\cehead{\xrfill[0.525ex]{0.6pt}~\theshortauthor~\xrfill[0.525ex]{0.6pt}}
\cfoot*{\xrfill[0.525ex]{0.6pt}~\pagemark~\xrfill[0.525ex]{0.6pt}}
\newcommand{\theabstract}[1]{\par\bgroup\noindent\textbf{\textsf{Abstract.}} #1\egroup}
\newcommand{\thekeywords}[1]{\par\smallskip\bgroup\noindent\textbf{\textsf{Keywords.}}\newcommand{\and}{ $\bullet$ } #1\egroup}
\newcommand{\themsc}[1]{\par\smallskip\bgroup\noindent\textbf{\textsf{2020 Mathematics Subject Classification.}}\newcommand{\and}{ $\bullet$ } #1\egroup}
\newcommand*{\affilref}[1]{\ref{affiliation#1}}
\newcommand*{\affiliation}[3]{
	\footnotetext[#1]{\label{affiliation#2} #3}
}
\addtokomafont{disposition}{\sffamily}

\usepackage{dsfont}  
\usepackage[normalem]{ulem} 
\usepackage{nicefrac}
\usepackage{algpseudocode,algorithm}

\captionsetup[algorithm]{
  labelfont = {sf,bf},
  labelsep = period
}

\usepackage[noabbrev,capitalise,nosort,nameinlink]{cleveref}

\crefname{counterexample}{Counterexample}{Counterexamples}
\Crefname{counterexample}{Counterexample}{Counterexamples}

\usepackage{adjustbox}
\usepackage{tikz}
\usetikzlibrary{bayesnet,er,trees,shapes.symbols,mindmap,arrows,snakes,shapes.misc,shapes.arrows,chains,matrix,positioning,scopes,decorations.pathmorphing,patterns}
\usepackage{tikz-cd} 

\usetikzlibrary{decorations.markings}
\tikzset{
  negate/.style={
    decoration={
      markings,
      mark= at position 0.5 with {
        \node[transform shape] (tempnode) {$/$};
      },
    },
    postaction={decorate},
  },
}

\usepackage{chngcntr}

\usepackage{thmtools,thm-restate}

\usepackage{array}
\newcolumntype{x}[1]{>{\centering\arraybackslash\hspace{0pt}}p{#1}}
\usepackage{booktabs,colortbl,xcolor,xfrac}
\usepackage{multirow}

\usepackage{ragged2e}

\usepackage{matlab-prettifier}

\newcommand*{\betterpoints}{higher-order points\xspace}
\newcommand*{\trv}{\bP_{\textup{tar}}}
\newcommand*{\simpv}{\bP_{\textup{ref}}}
\newcommand*{\trd}{\rho_{\textup{tar}}}
\newcommand*{\simpd}{\rho_{\textup{ref}}}

\newcommand*{\iidsim}{\stackrel{\textsc{iid}}{\sim}}

\newcommand*{\cA}{\mathcal{A}}
\newcommand*{\cC}{\mathcal{C}}
\newcommand*{\cF}{\mathcal{F}}

\newcommand*{\cX}{\mathcal{X}}
\newcommand*{\bPU}{\bP_{\mathcal{U}}}
\newcommand*{\bPN}{\bP_{\mathcal{N}}}

\DeclareMathOperator{\erf}{erf}
\DeclareMathOperator{\sgn}{sgn}
\DeclareMathOperator{\diver}{div}
\DeclareMathOperator{\GL}{GL}
\DeclareMathOperator{\Unif}{Unif}
\DeclareMathOperator{\BVHK}{BVHK}

\newcommand{\grayrule}{\arrayrulecolor{black!10}\specialrule{0.2pt}{2\jot}{2\jot}\arrayrulecolor{black}}

\newcommand*{\bR}{\mathbb{R}}
\newcommand*{\bN}{\mathbb{N}}

\newcommand*{\bE}{\mathbb{E}}

\newcommand*{\bP}{\mathbb{P}}

\newcommand*{\cN}{\mathcal{N}}
\newcommand*{\cO}{\mathcal{O}}

\newcommand*{\fN}{\mathfrak{N}}
\newcommand*{\fw}{\mathfrak{w}}
\newcommand*{\fX}{\mathfrak{X}}
\newcommand*{\fY}{\mathfrak{Y}}

\newcommand*{\quark}{\setbox0\hbox{$x$}\hbox to\wd0{\hss$\cdot$\hss}}
\newcommand*{\rd}{\mathrm{d}}

\DeclareMathOperator{\Id}{Id}

\DeclarePairedDelimiter\abs{\lvert}{\rvert}%
\DeclarePairedDelimiter\norm{\lVert}{\rVert}%
\makeatletter
\let\oldabs\abs
\def\abs{\@ifstar{\oldabs}{\oldabs*}}
\let\oldnorm\norm
\def\norm{\@ifstar{\oldnorm}{\oldnorm*}}
\makeatother

\newcommand*{\set}[2]{\{ #1 \mid #2 \}}

\definecolor{darkgreen}{rgb}{0,0.4,0}

\newenvironment{greennote}{\par\color{darkgreen}}{\par}

\newcommand*{\defeq}{\coloneqq}
\renewcommand{\#}{\ensuremath \sharp}
\renewcommand{\ge}{\geqslant}
\renewcommand{\geq}{\geqslant}
\renewcommand{\le}{\leqslant}
\renewcommand{\leq}{\leqslant}

\theoremstyle{plain}
\newtheorem{theorem}{\sffamily Theorem}[section]
\newtheorem{proposition}[theorem]{\sffamily Proposition}
\newtheorem{lemma}[theorem]{\sffamily Lemma}
\newtheorem{corollary}[theorem]{\sffamily Corollary}
\theoremstyle{definition}
\newtheorem{definition}[theorem]{\sffamily Definition}

\newtheorem{example}[theorem]{\sffamily Example}

\newtheorem{remark}[theorem]{\sffamily Remark}

\newtheorem{terminology}[theorem]{\sffamily Terminology}

\numberwithin{equation}{section}
\numberwithin{figure}{section}
\numberwithin{table}{section}

\usepackage{acro}
\DeclareAcronym{BIP}{short=BIP, long=Bayesian inverse problem}
\DeclareAcronym{IID}{short=IID, long=independent and identically distributed}
\DeclareAcronym{MC}{short=MC, long=Monte Carlo}
\DeclareAcronym{LAIS}{short=LAIS, long=layered adaptive importance sampling}
\DeclareAcronym{MCMC}{short=MCMC, long=Markov chain Monte Carlo}
\DeclareAcronym{ODE}{short=ODE, long=ordinary differential equation}
\DeclareAcronym{PDE}{short=PDE, long=partial differential equation}
\DeclareAcronym{QMC}{short=QMC, long=quasi-Monte Carlo}
\DeclareAcronym{RKHS}{short=RKHS, long=reproducing kernel Hilbert space}
\DeclareAcronym{SG}{short=SG, long=sparse grids}
\DeclareAcronym{SVGD}{short=SVGD, long=Stein variational gradient descent}
\DeclareAcronym{TQMC}{short=TQMC, long=transported QMC}
\DeclareAcronym{TSG}{short=TSG, long=transported SG}

\title{Transporting Higher-Order Quadrature Rules}
\subtitle{Quasi-Monte Carlo Points and Sparse Grids for Mixture Distributions}
\shorttitle{Transporting Higher-Order Quadrature Rules: QMC and Sparse Grids}
\author{%
	Ilja~Klebanov\textsuperscript{\affilref{FUB}}%
	\and
	T.~J.~Sullivan\textsuperscript{\affilref{Warwick},\affilref{Turing}}%
}
\shortauthor{I.~Klebanov and T.~J.~Sullivan}

\begin{document}
	\maketitle
	
	\affiliation{1}{FUB}{Freie Universit\"at Berlin, Arnimallee 6, 14195 Berlin, Germany (\email{klebanov@zedat.fu-berlin.de})}
	\affiliation{2}{Warwick}{Mathematics Institute and School of Engineering, University of Warwick, Coventry, CV4 7AL, United Kingdom (\email{t.j.sullivan@warwick.ac.uk})}
	\affiliation{3}{Turing}{Alan Turing Institute, 96 Euston Road, London, NW1 2DB, United Kingdom}
	
	\begin{abstract}\small
		\theabstract{Integration against, and hence sampling from, high-dimensional probability distributions is of essential importance in many application areas and has been an active research area for decades.
One approach that has drawn increasing attention in recent years has been the generation of samples from a target distribution $\trv$ using transport maps:
if $\trv = T_\# \simpv$ is the pushforward of an easily-sampled probability distribution $\simpv$ under the transport map $T$, then the application of $T$ to $\simpv$-distributed samples yields $\trv$-distributed samples.
This paper proposes the application of transport maps not just to random samples, but also to quasi-Monte Carlo points, higher-order nets, and sparse grids in order for the transformed samples to inherit the original convergence rates that are often better than $N^{-1/2}$, $N$ being the number of samples/quadrature nodes.
Our main result is the derivation of an explicit transport map for the case that $\trv$ is a mixture of simple distributions, e.g.\ a Gaussian mixture, in which case application of the transport map $T$ requires the solution of an \emph{explicit} ODE with \emph{closed-form} right-hand side.
Mixture distributions are of particular applicability and interest since many methods proceed by first approximating $\trv$ by a mixture and then sampling from that mixture (often using importance reweighting).
Hence, this paper allows for the sampling step to provide a better convergence rate than $N^{-1/2}$ for all such methods.
}
		\thekeywords{Quasi-Monte Carlo%
\and
sampling of probability distributions%
\and
sparse grids%
\and
transport maps%
}
		\themsc{62D99
\and%
65C05
\and%
65D32
\and%
65D40
\and%
11K36 
	\end{abstract}

\section{Introduction}
\label{section:Intro}

When estimating the expected value $\bE_{\trv}[f] \defeq \int f(y)\, \rd \trv(y)$ of some function $f\in L^{1}(\trv)$ with respect to some target probability distribution $\trv$, which is a crucial task in many areas of applied mathematics and statistics such as Bayesian inference, one often faces one or more of the following challenges:
\begin{itemize}
	\item
	high dimension: for a given desired level of accuracy, the number of grid points required by classical quadrature rules grows exponentially with the dimension (`curse of dimensionality'), rendering such methods infeasible;
	\item
	multimodal distributions $\trv$ that cannot be well approximated by more simple distributions such as a normal distribution;
	\item
	the probability density of $\trv$ is only given up to an unknown normalisation constant.
\end{itemize}
By far the most widely used methodologies that can tackle these issues are \ac{MC} and \ac{MCMC} methods, which approximate the expected value by an empirical mean or ergodic average
\begin{equation}
	\label{MCapproximation}
	\bE_{\trv}[f]
	\approx
	\frac{1}{N} \sum_{n=1}^{N} f(Y_n) .
\end{equation}
In the case of `vanilla' \ac{MC}, the points $Y_{1}, \dots, Y_{N}$ are \ac{IID} with law $\trv$, if $\trv$ is simple enough to draw direct samples from it;
in the case of \ac{MCMC}, $Y_{1}, \dots, Y_{N}$ form a Markov chain with asymptotic distribution $\trv$ as $N \to \infty$.
In both cases, the central limit theorem (CLT) and ergodicity arguments guarantee that the approximation error $\abs{\bE_{\trv}[f] - \frac{1}{N} \sum_{n=1}^{N} f(Y_n)}$ has a convergence rate of $N^{-1/2}$ (see e.g.\ \citet[Chapter~17]{meyn2009markov}), independently of the dimension $d$ --- although, via the variance of the integrand $f$, $d$ may still appear implicitly as a premultiplier of the convergence rate.
While this breaks the curse of dimensionality, the convergence is still rather slow, particularly in applications where $f$ is costly to evaluate.
Under suitable regularity assumptions on $f$, there exist several well-known alternatives to \ac{MC} methods with better convergence rates:
\begin{itemize}
	\item \ac{QMC} methods \citep{niederreiter1992random,fang1993number,caflisch1998monte,dick2013high};
	\item higher-order digital nets \citep{dick2010digital};
	\item \acl{SG} (\acs{SG}; \citealp{smolyak1963quadrature,zenger1991sparse,gerstner1998numerical}).
\end{itemize}
We will refer to these types of point sequences as \emph{\betterpoints}.
However, such point sequences have only been constructed for very few, simple distributions $\trv$ such as uniform distributions on the cube, $\bPU = \Unif([0,1]^d)$, and (standard) normal distributions, $\bPN = \cN(0,\Id_d)$.
In this paper, we suggest constructing \emph{\betterpoints} for other distributions by the application of transport maps, an approach already suggested for \ac{MC} samples as an alternative to \ac{MCMC} \citep{el2012bayesian,marzouk2016introduction,parno2018transport}:
If a sequence $X_1, \dots, X_N$ with the desired convergence rate can be constructed for a simple distribution $\simpv$ and $T$ is a measurable map such that $T_\# \simpv = \trv$, then the sequence given by $Y_n \defeq T(X_n)$ will inherit this convergence rate for the distribution $\trv$.
The reason for this is the change of variables formula,
\begin{equation}
	\label{equ:ChangeOfVariablesFormula}
	\bE_{\trv}[f(Y)]
	=
	\bE_{\simpv}[f\circ T],
	\qquad
	\frac{1}{N} \sum_{n=1}^{N} f(Y_n)
	=
	\frac{1}{N} \sum_{n=1}^{N} f\circ T(X_n),
\end{equation}
which provides a \emph{dual viewpoint} on the transport idea:
Integrating the function $f$ against a complicated distribution $\trv$, which we try to rewrite as the pushforward of a simple distribution $\simpv$ under a transport map $T$, is equivalent to integrating the transformed function $f\circ T$ against the simple distribution $\simpv$
(and similarly, applying the function $f$ to transformed points $Y_{n} = T(X_{n})$ is equivalent to applying the transformed function $f\circ T$ to the original points $Y_{n}$, which really is a tautology), cf.\ \Cref{fig:transport_map_to_Gaussian_mixture_grid_and_pulled_back_function}.
This changes the task of sampling the target distribution into the task of constructing a suitable transport map, which is typically extremely challenging, especially in high dimensions.
A further note of caution is called for:
The regularity assumptions needed for establishing the convergence rate of quadrature using the points $X_{n}$ now have to be verified for the function $f\circ T$ instead of $f$, which may modify the class of functions $\cC$ to which such methods are applicable;
cf.\ \Cref{section:TransportMaps}).
This paper does not attempt to give a comprehensive answer to this issue.
However, in most practical applications, there is no reason to presume that it is more likely for $\cC$ to contain $f$ rather than $f \circ T$ (unless one is interested in very specific quantities of interest, such as the mean or (co-)variance of $\trv$, in which case $f$ is a polynomial).

This paper's approach to the construction of transport maps is based on \acp{ODE} and the corresponding continuity equations:
The initial $\simpv$-distributed points are transported to $\trv$-distributed points by a flow map $\Phi_{t}$ of an \ac{ODE} with appropriately chosen right-hand side, which guarantees that the final flow map at $t=1$ defines a transport map from $\simpv$ to $\trv$.
While this idea has a long history \citep{moser1965on,dacorogna1990on} with several recent implementations \citep{heng2015gibbs, reich2011dynamical, reich2019schroedinger, liu2017stein}, to the best of our knowledge it has not yet been applied to \betterpoints.

The main contribution of this paper is the derivation of an \ac{ODE} with \emph{analytic} right-hand side for the case that $\trv$ is a mixture of simple distributions such that its flow map $\Phi_{t}$ for $t=1$ is an exact transport map from $\simpv$ to $\trv$.
This case should by no means be seen as a toy example.
Due to their flexibility, mixture modes have proven themselves extremely useful in a variety of applications in e.g.\ machine learning and data analysis \citep{Bishop2006pattern,McLachlan2019mixture}.
While most practical distributions (e.g.\ Bayesian posteriors) are not mixtures, many methodologies concentrate on approximating them by mixture distributions.
Such methods include
\begin{itemize}
	\item
	variational inference \citep{blei2017variational};
	\item 
	all variants of \acl{LAIS} (\acs{LAIS}; \citealp{martino2017layered,bugallo2017adaptive,martino2017anti});
	\item 
	a variety of kernel methods including kernel herding \citep{chen2010super,lacoste2015sequential}.
\end{itemize}
Subsequently, expected values with respect to the original distribution can be estimated by empirical means of samples from the mixture, possibly using importance reweighting (\citealp[Section~5.7]{rubinstein2016simulation}; \citealp[Section~3.3]{robert2004monte}) in order to correct for the approximation error.
Again, mixture distributions are favorable as importance sampling distributions due to their flexibility to approximate e.g.\ multimodal distributions.
It appears lucrative to perform the second step using (transported) \betterpoints in place of random samples in order to obtain better convergence rates.
We demonstrate this procedure by performing \ac{LAIS} with transported \ac{QMC} points in \Cref{section:Lais_with_QMC}.
Furthermore, this approach extends naturally to copulas of mixture distributions \citep{gunawan2021flexible}.

\begin{remark}
	\label{remark:QMC_points_from_each_component_separately}
	One might object that an easier approach to obtain the desired convergence rate is to apply \ac{QMC} or \ac{SG} to each mixture component \emph{separately} as sketched in \Cref{alg:componentwise_transport_sampling_of_mixtures} below.
	In fact, such methods are the topic of current research \citep{Cui2023quasimonte} and work well for small and moderate numbers of mixture components.
	Note, however, that
	the affordable budget $N$ of quadrature points might be comparable or even smaller than the number of mixture components $J$, in which case the convergence rate of the above approach appears pointless, cf.\ \Cref{fig:MC_QMC_error_over_J_dim_2}.
\end{remark}

The paper is structured as follows.
After commenting on related work in \Cref{section:RelatedWork}, \Cref{section:TransportMaps} discusses the transportation of point sets for numerical quadrature and reviews how the convergence rates of such quadrature schemes are assessed.

After giving some brief preliminaries on the construction of transport maps in \Cref{section:ConstructionTransportMaps}, in \Cref{section:TransportToMixtures} we discuss how \betterpoints from a mixture of simple distributions can be produced by means of transport.
In particular, we introduce and compare the performance of transported \ac{MC}, \ac{TQMC}, and \ac{TSG}.
Using the example of \ac{LAIS}, we show how this procedure can be combined with several state-of-the-art inference methods to yield practically relevant methodologies.

\Cref{section:NumericalExperiments} illustrates our approach on a suite of numerical test cases with some implementation details given in \Cref{section:ImplementationDetails}.
We test both \ac{TQMC} on its own, and \ac{TQMC} within \ac{LAIS}, against an extensive suite of integrands with dimensions $d$ varying from $2$ to $50$.

\Cref{section:Conclusion} gives some closing remarks, and some technical supporting results can be found in \Cref{section:Appendix}.
	
\section{Related Work}
\label{section:RelatedWork}

Prior to this work, surprisingly few papers have addressed the application of transport maps to \ac{QMC} sequences.
\citet{basu2016transformations} considered such transport maps in the case where both $\simpv$ and $\trv$ are uniform distributions on different but fairly simple domains, such as a simplex or a sphere.
Their focus lies on building low discrepancy sequences on domains $\cX$ different from the unit cube, not to represent given probability distributions in $\bR^d$.

\citet{kuo2010lattice} and \citet{nichols2014lattice} analyse a transport map from the uniform distribution on a cube to a product measure on $\bR^{d}$, such as the standard normal distribution, cf.\ \Cref{example:TransportMapUniformToNormal}.
The intricate analysis of the class of functions to which such transported \ac{QMC} sequences are applicable, which only holds true for this rather simple class of distributions (in particular, due to the product structure, the corresponding transport map acts in a coordinatewise manner), shows how difficult this step turns out to be.
An algorithm for the transport of QMC points to mixture distributions has recently been suggested by \citet{Cui2023quasimonte}.
While their approach to treat each mixture component separately works for a small number $J$ of mixture components (compared to the number $N$ of evaluation points), it is likely to fail when $J$ is of the order of (or even larger than) $N$, in which case a genuine (non-componentwise) transport map to the mixture appears necessary.

To the best of our knowledge, the transport of sparse grids to other probability distributions has been discussed only in special cases in dimension $d=3$ by \citet{rodriguez2008numerical} for the approximation of certain integrals arising in computational chemistry.

The idea of using flow maps of suitably constructed \ac{ODE}s as transport maps in order to sample from some target distribution is not new.
\citet{reich2011dynamical}, \cite{daum2012particle} and \citet{heng2015gibbs} suggested several approaches that are typically summarised as \emph{particle flow methods}.
Further algorithms can be viewed as time-discretised versions of such particle flows, moving from a tractable distribution to some target distribution via a sequence of intermediate distributions.
These include \emph{sequential Monte Carlo methods} \citep{delmoral2004feynman,delmoral2006smc}, \emph{annealed importance sampling} \citep{neal2001annealed}, and so-called \emph{particle-based variational inference} \citep{liu2019understanding,wang2019accelerated,chen2018unified,chen2018stein,chen2019stein}, with Stein variational gradient descent (SVGD; \citealt{liu2016stein,liu2017stein,duncan2019stein}) as its arguably most prominent method.
However, most of particle flow methods involve some sort of interaction between the particles, which might break the delicate structure of \betterpoints and eliminate the desired gain in convergence rate.
This might be the reason why such methods have not yet been applied to \betterpoints.
Note that the ODE we construct in this paper involves no particle interactions (in fact, each point can be transported separately), hence such issues do not arise in our case.
	\section{Transporting Points for Numerical Quadrature}
\label{section:TransportMaps}

\begin{definition}
	Let $(\Omega_0,\cA_0)$ and $(\Omega_1,\cA_1)$ be measurable spaces.
	The \emph{pushforward} of a probability measure $\bP_{0}$ on $(\Omega_0,\cA_0)$ through a measurable map $T \colon \Omega_0 \to \Omega_1$ is the probability measure $T_{\#} \bP_{0}$ on $(\Omega_{1}, \cA_{1})$ defined by
	\begin{equation}
		\label{eq:DeterministicPushforward}
		(T_{\#} \bP_{0}) (B) \defeq \bP_{0} ( T^{-1} (B) )
		\quad
		\text{for each $B \in \cA_{1}$.}
	\end{equation}
	We also call the map $T$ a \emph{transport map} from $\bP_0$ to $\bP_1$ when $\bP_1$ is the pushforward of $\bP_0$ under $T$, i.e.\ when $T_\# \bP_0 = \bP_1$ or $T_\# \colon \bP_0 \mapsto \bP_1$.
\end{definition}

Whenever $\Omega_{0}$ (resp.\ $\Omega_{1}$) is a Borel-measurable subset of $\bR^{d}$, the $\sigma$-algebra $\cA_{0}$ (resp.\ $\cA_{1}$) is assumed to be the corresponding Borel $\sigma$-algebra.

\begin{remark}
	\label{remark:ConsequenceOfTransportMapsForSampling}
	As a consequence, if $X$ is a $\bP_0$-distributed random variable, $X\sim\bP_0$, and $T$ is a transport map from $\bP_{0}$ to $\bP_{1}$, then $Y \defeq T(X)$ is $\bP_1$-distributed, $Y\sim\bP_1$.
\end{remark}

\begin{remark}
	\label{remark:StochasticTransport}
	This paper focusses on \emph{deterministic} transport maps $T$ from $\bP_0$ to $\bP_1$.
	There is also significant interest in \emph{stochastic} transports, i.e.\ in transition kernels $\kappa \colon \Omega_{0} \times \cA_{1} \to [0, 1]$ such that
	\begin{equation}
		\label{eq:StochasticPushforward}
		(\kappa_{\#} \bP_{0}) (B) \defeq \int_{\Omega_{0}} \int_{B} \kappa (\omega_{0}, \rd \omega_{1}) \, \bP_{0} (\rd \omega_{0})
		\quad
		\text{for each $B \in \cA_{1}$}
	\end{equation}
	satisfies $\kappa_{\#} \bP_{0} = \bP_{1}$.
	(The deterministic case is the special case $\kappa (\omega_{0}, \rd \omega_{1}) = \delta_{T(\omega_{0})} (\rd \omega_{1})$, where $\delta_{a}$ denotes the unit Dirac measure centred at $a$.)
	\citet{reich2019schroedinger} offers a review of such methods in the context of Bayesian data assimilation.
\end{remark}

If $\Omega_0,\Omega_1\subseteq\bR^d$ are Borel subspaces and $d=1$, then a transport map $T$ with $T_\# \colon \bP_0 \mapsto \bP_1$ can be defined using the cumulative distribution functions $F_0$ of $\bP_0$ and $F_1$ of $\bP_1$ via $T \defeq F_1^{-1}\circ F_0$, where $F_1^{-1}(x) \defeq \inf \set{ y\in\bR }{ F_1(y)\geq x }$, an approach known as the inverse transform method \citep[Chapter~2.3.1]{rubinstein2016simulation}.
In dimension $d>1$, a similar construction can be performed which is called the Knothe--Rosenblatt rearrangement \citep[Chapter~1]{villani2008optimal}, but it becomes computationally infeasible in high dimension except for certain special cases.
One such case occurs when $\simpv$ and $\trv$ factorise (that is, they are product measures of one-dimensional distributions) and the inverse transform method can be applied in each dimension separately, as in the following classical example:

\begin{example}
	\label{example:TransportMapUniformToNormal}
	Let $\bP_0 \defeq \Unif((0,1)^d)$ and $\bP_1 \defeq \cN(0,\Id_d)$.
	The transformation $T^\cN\colon (0,1)^d \to \bR^d$ given by
	\begin{equation}
		\label{equ:TransportMapUniformToNormal}
		T^\cN(x) \defeq \big(\sqrt{2}\, \erf^{-1}\left(2 x_j - 1\right) \big)_{j=1,\dots,d}
	\end{equation}
	is a transport map from $\bP_0$ to $\bP_1$, where $\erf (z) \defeq \frac{2}{\sqrt{\pi}} \int_{0}^{z} e^{-t^{2}} \, \rd t$ is the error function and $\erf^{-1}$ is its inverse.
	The action of $T^{\cN}$ on an equidistant grid is illustrated in \Cref{figure:TransportMapUniformToNormal}.
\end{example}

\begin{figure}[t]
	\centering
	\begin{subfigure}[b]{0.35\textwidth}
		\centering
		\includegraphics[width=\textwidth]{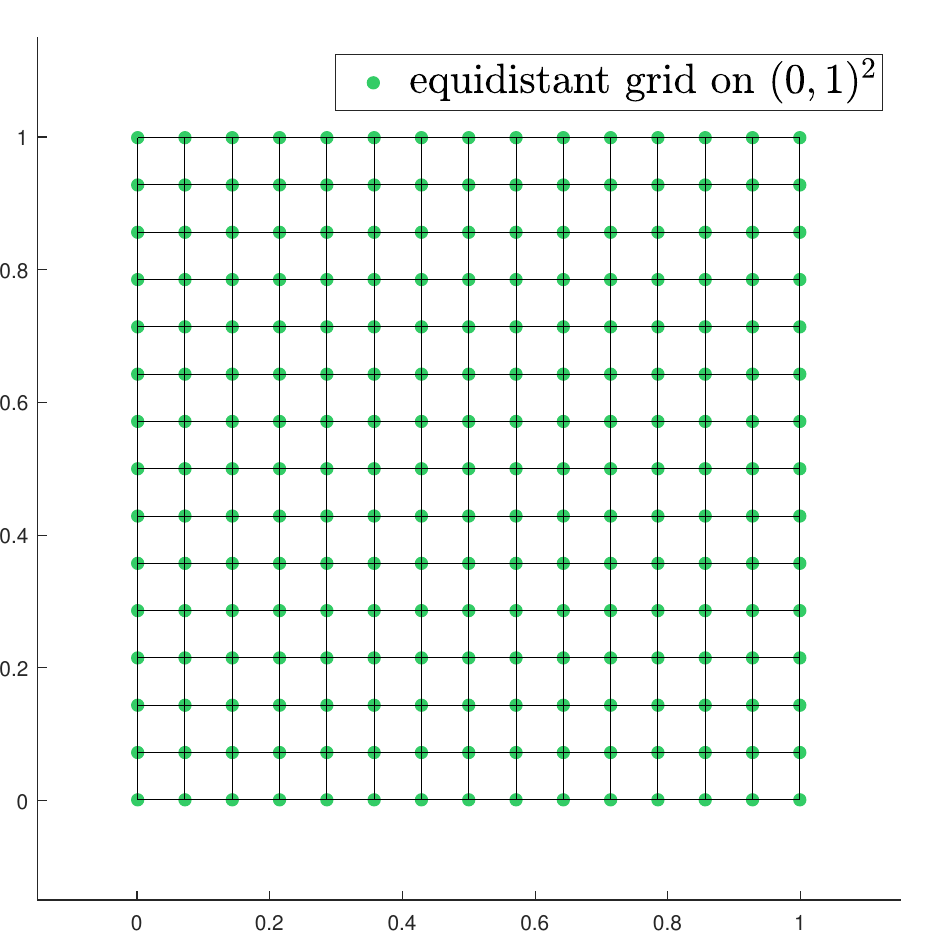}
	\end{subfigure}
	\hfill
	\begin{subfigure}[b]{0.28\textwidth}
		\Large
		\centering            
		$\xrightarrow[\text{$\Unif((0,1)^d)$ to $\cN(0,\Id_d)$ }]{\text{Transport map $T^\cN$ from}}$
		\vspace{2cm}
	\end{subfigure}
	\hfill
	\begin{subfigure}[b]{0.35\textwidth}
		\centering
		\includegraphics[width=\textwidth]{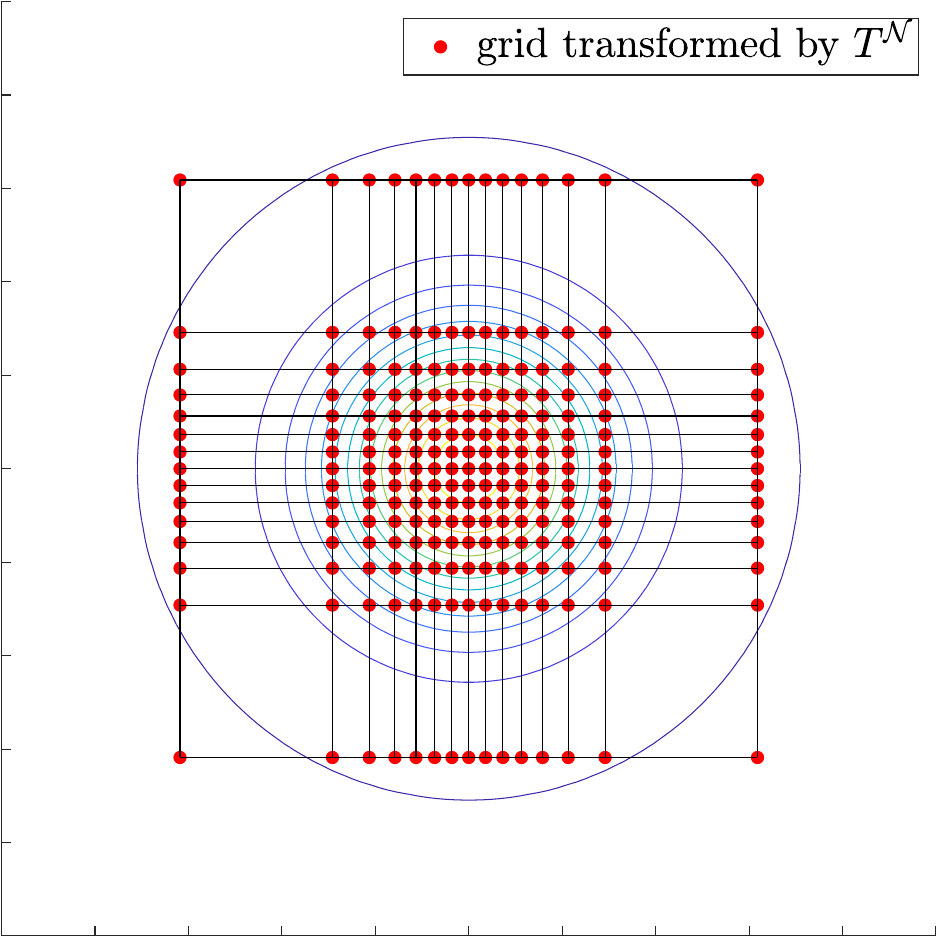}
	\end{subfigure}     
	\caption{In order to illustrate the transformation $T^\cN$ from \eqref{equ:TransportMapUniformToNormal} in \Cref{example:TransportMapUniformToNormal}, we apply it to an equidistant grid on $(0,1)^{2}$.
	(Since $T^\cN$ is defined on the open cube $(0,1)^d$, there must be no grid points on the boundary. Therefore, the grid is chosen as $G\times G$, where $G \defeq \varepsilon + \{ (1-2\varepsilon) k/(K-1) \mid k= 0,\dots,K-1 \}$, with $K=15$ and $\varepsilon = 10^{-3}$.)}
	\label{figure:TransportMapUniformToNormal}
\end{figure}

For more complicated distributions, e.g.\ Bayesian posteriors in high dimension, transport maps often have to be approximated, which we discuss in \Cref{section:ConstructionTransportMaps}.
In \Cref{section:TransportToMixtures} we will construct transport maps to certain mixture distributions.

As indicated in \Cref{remark:ConsequenceOfTransportMapsForSampling}, a transport map $T$ from $\bP_0$ to $\bP_1$ can be used to generate $\bP_1$-distributed samples once we are able to sample from $\bP_0$.
When $X_n \iidsim \bP_0$ for $n \in \bN$, it follows that $Y_n \defeq T(X_n) \iidsim \bP_1$.
Similarly to independent samples, one can consider transporting other point sequences such as sparse grids and \ac{QMC} points, in order to obtain a better convergence rate of certain numerical methods, e.g.\ quadrature.
We first give a general definition of convergence rates of (sequences of) deterministic or stochastic quadrature rules:

\begin{definition}
	\label{def:ConvergenceRate}
	Let $d\in\bN$, $\Omega\subseteq\bR^d$ be a domain in $\bR^d$, let $(\Omega,\cA,\bP)$ be a probability space, let $\cC\subseteq \bR^\Omega$ be a class of functions on $\Omega$, let $\mathfrak{N} = (N_{k})_{k\in\bN}$ be an increasing sequence in $\bN$ and let $\fw = (w^{(N_{k})})_{k\in\bN}$ be a sequence of weight vectors $w^{(N_{k})} = (w_{n}^{(N_{k})})_{n=1,\dots,N_{k}}$ (i.e.\ $w_{n}^{(N_{k})}\in \bR$ with $\sum_{n=1}^{N_{k}} w_{n}^{(N_{k})} = 1$ for each $k\in\bN$).
	We say that a sequence $\fX = (X^{(N_{k})})_{k \in \bN}$ of families $X^{(N_{k})} = (X_{n}^{(N_{k})})_{n = 1,\dots,N_{k}}$ in $\bR^{d}$, where $X_{n}^{(N_{k})}$ can either be deterministic points or random variables, has \emph{$(\bP,\cC,\mathfrak{N},\fw)$-convergence rate} $r\colon \bN\to\bR_{>0}$, if, for any $f\in \cC$,
	\begin{equation}
		\label{equ:DefinitionConvergenceRate}
		\abs{
			\bE_{X\sim \bP}[f(X)]
			-
			\sum_{n=1}^{N_{k}} w_{n}^{(N_{k})} f(X_{n}^{(N_{k})})
		}
		=
		\cO_{p}(r(N_{k})).
	\end{equation}
	In this case, we will refer to $\fX$ as \emph{\betterpoints with distribution $\bP$ and $(\bP,\cC,\mathfrak{N},\fw)$-convergence rate $r$}.
\end{definition}

\begin{remark}
	To simplify notation, we will often drop the $\mathfrak{N}$ dependence of the convergence rate and write $N$ in place of $N_{k}$ as though $\eqref{equ:DefinitionConvergenceRate}$ would hold for any $N\in\bN$.
	This is, however, an abuse of notation since the most advantageous performance of a quadrature rule is often attained only along specific sequences $\mathfrak{N}$, \ac{QMC} quadrature rules being a good example of this phenomenon \citep{owen2020sobol} as are \acp{SG}, the sizes of which result from the specific constructions after choosing a certain `level' \citep{novak1996high}.
	Furthermore, we will often write $X_{n}$ and $w_{n}$ instead of $X_{n}^{(N_{k})}$ and $w_{n}^{(N_{k})}$, respectively (in the case of \ac{QMC} points, $X_{n}^{(N_{k})}$ is, in fact, independent of $N_{k}$).
	Finally, we will often omit the prefix $(\bP,\cC,\mathfrak{N},\fw)$- if it is clear which quantities are being considered.	
\end{remark}

By the central limit theorem, MC samples are therefore \betterpoints with rate $r(N) = N^{-1/2}$.
Remarkably, this holds for the class $\cC = L^{2}(\bP)$ of all $\bP$-square-integrable functions and independently of the dimension $d$.
The convergence rates for sparse grids and QMC points are also well established and there are various results for different types of constructions. Here are two classical examples:

\begin{example}[Convergence rate for sparse grids]
	Consider $\Omega = [-1,1]^d$, $\bP = \Unif(\Omega)$ and the class of functions of regularity $\alpha\in\bN$,
	\begin{equation}
		\label{equ:integrand_class_sparse_grids}
		\cC
		=
		\cF_d^{\alpha}(\Omega)
		\defeq
		\{ f\colon \Omega \to\bR \mid D^\beta f \text{ is continuous if $\beta_i\le\alpha$ for all } i \}.
	\end{equation}
	Then the sparse grid $A(q,d)$ of \citet[Section~2]{novak1996high} (see also \citet{wasilkowski1995tensor}) has the following convergence rate \citep[Corollary~1]{novak1996high}:
	\begin{equation}
	\label{equ:Convergence_rate_SG}
		r(N)
		=
		N^{-\alpha} (\log N)^{(d-1)(\alpha + 1)}.
	\end{equation}
	Here, the sequences $\fw$ and $\fN$ of weight vectors and point numbers are defined by the corresponding sparse grid construction.
\end{example}

Similar convergence rates hold for sparse grids designed for other product measures $\bP$ on possibly unbounded domains $\Omega$, e.g.\ the sparse grids based on Gauss--Hermite points when $\bP$ is a standard normal distribution \citep{novak1997curse,novak1999interpolatory,novak1999cubature}.

\begin{example}[Convergence rate for \ac{QMC}]
	\label{example:Convergence_rate_QMC}
	Considering $\Omega = [0,1]^d$, $\bP = \Unif(\Omega)$ and the class of functions with a bounded variation in the sense of Hardy and Krause \citep{owen2005multidimensional}, $\cC = \BVHK(\Omega)$, the $d$-dimensional Halton sequence has the following convergence rate \citep[Section~5.6]{caflisch1998monte}:
	\begin{equation}
		\label{equ:Convergence_rate_QMC}
		r(N)
		=
		N^{-1} (\log N)^{d}.
	\end{equation}
	Here, the weights are uniform and $\fN = \bN$.
\end{example}

\begin{remark}
	\label{remark:recent_QMC_constructions}
	More recent QMC constructions, such as (randomly shifted) lattice rules, rely on weighted Sobolev and Korobov spaces acting as reproducing kernel Hilbert spaces rather than on function classes of bounded variation;
	see e.g.\ \citet{dick2013high} for an overview.
	Similar to \Cref{example:Convergence_rate_QMC}, convergence rates of almost $N^{-1}$ can be obtained in these cases;
	see \citet[Theorems~5.8, 5.9, 5.10, and 5.12]{dick2013high} for corresponding results.
\end{remark}

The following simple proposition justifies our approach of transporting \betterpoints:

\begin{proposition}[Transfer of convergence rate]
	\label{prop:TransportedConvergenceRate}
	Let $(\Omega_0,\cA_0,\bP_0)$ and $(\Omega_1,\cA_1,\bP_1)$ be two probability spaces, $\cC\subseteq \bR^{\Omega_0}$ and $T$ be a transport map from $\bP_0$ to $\bP_1$.
	If $(X_n)_{n\in\bN}$ has $(\bP_0,\cC)$-convergence rate $r\colon \bN\to\bR_{>0}$, then $(Y_n)_{n\in\bN}$ with $Y_n = T(X_n)$ also has $(\bP_1,\cC_T)$-convergence rate $r$, where $\cC_T \defeq \{ g\colon \Omega_1\to\bR \mid g\circ T\in\cC \}$.
\end{proposition}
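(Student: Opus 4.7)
The plan is to exploit the change-of-variables formula \eqref{equ:ChangeOfVariablesFormula} together with the defining property of the pushforward to directly reduce the $(\bP_1,\cC_T)$-convergence statement for $(Y_n)$ to the assumed $(\bP_0,\cC)$-convergence statement for $(X_n)$. The weights $\fw$ and the sequence $\mathfrak{N}$ are the same on both sides; only the class of integrands and the underlying measure change.

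Concretely, first I would fix an arbitrary $g\in\cC_T$. By definition of $\cC_T$, the composition $f\defeq g\circ T$ lies in $\cC$, so the hypothesis on $(X_n)$ yields
\begin{equation*}
	\left|\,\bE_{X\sim\bP_0}[f(X)] - \sum_{n=1}^{N_k} w_n^{(N_k)} f(X_n^{(N_k)})\,\right| = \cO_p(r(N_k)).
\end{equation*}
Next I would rewrite both terms in terms of $g$ and $Y_n = T(X_n)$. For the integral, the pushforward identity $T_\#\bP_0 = \bP_1$ (applied either to indicators and extended by the standard monotone-class argument, or invoked as the change-of-variables formula) gives $\bE_{X\sim\bP_0}[(g\circ T)(X)] = \bE_{Y\sim\bP_1}[g(Y)]$. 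For the quadrature sum, the pointwise identity $(g\circ T)(X_n) = g(Y_n)$ is a tautology. Substituting both into the displayed bound yields exactly
\begin{equation*}
	\left|\,\bE_{Y\sim\bP_1}[g(Y)] - \sum_{n=1}^{N_k} w_n^{(N_k)} g(Y_n^{(N_k)})\,\right| = \cO_p(r(N_k)),
\end{equation*}
which is the definition of $(\bP_1,\cC_T,\mathfrak{N},\fw)$-convergence rate $r$ for $(Y_n)$.

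There is no real obstacle; the only matter requiring a sentence of care is the measurability of $g$ and $g\circ T$, which is automatic since $T$ is measurable by definition of a transport map and the elements of $\cC_T$ are specified precisely so that $g\circ T$ is an admissible integrand in the $(\bP_0,\cC)$-convergence hypothesis. Thus the entire argument is a one-line application of the change-of-variables identity, and no regularity, optimality, or uniqueness properties of $T$ are needed.
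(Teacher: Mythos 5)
Your proposal is correct and is exactly the argument the paper intends: the paper's proof is the single sentence ``The statement follows directly from the change of variables formula \eqref{equ:ChangeOfVariablesFormula},'' and you have simply written out that one-line reduction in full. No differences in approach to report.
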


\begin{proof}
	The statement follows directly from the change of variables formula \eqref{equ:ChangeOfVariablesFormula}.
\end{proof}

\begin{remark}
	\label{remark:transformed_class_of_functions}
	Note that we make no statement about the properties of the class of functions $\cC_T$.
	In the case of sparse grids with $\cC = \cF_d^{\alpha}(\Omega_0)$ as in \eqref{equ:integrand_class_sparse_grids}, if $T$ is a $C^{m}$-diffeomorphism with $m\ge d\alpha$, then $\cC_T \supseteq \cF_d^{\alpha}(\Omega_1)$ \citep[Section~7.4, Corollary~5]{arnold2006ode}.
	In the case of QMC and $\cC = \BVHK(\Omega_0)$ it is much harder to find suitable conditions on $T$ for $\cC_T \supset \BVHK(\Omega_1)$ to hold.
	Partial answers are given by \citet{basu2016transformations} and \citet{josephy1981composingBV}.
	For certain product measures on $\bR^{d}$, \citet{kuo2010lattice} and \citet{nichols2014lattice} discuss how the resulting function class $\cC_T \defeq \{ g\colon \bR^{d}\to\bR \mid g\circ T\in\cC \}$ can be characterised for the corresponding transport map $T\colon (0,1)^{d} \to \bR^{d}$, which in this case acts in a coordinatewise manner similar to $T^{\cN}$ in \Cref{example:TransportMapUniformToNormal}.
	The sophisticated derivations in these papers suggest that the analysis of the function class $\cC_{T}$ for the more complicated (in particular, non-coordinatewise) transport maps established in this manuscript justifies a separate paper.
	It remains an open problem for future research.
\end{remark}

\begin{remark}
	In most of our examples, $\trv$ will be a Gaussian mixture defined on the whole Euclidean space $\bR^{d}$, which is not homeomorphic to a closed cube.
	While for \ac{SG} there exist constructions based on Gauss--Hermite points which guarantee convergence rates similar to \eqref{equ:Convergence_rate_SG} on $\bR^{d}$, one has to be particularly careful when working with QMC constructions based on $\BVHK([0,1]^{d})$.
	However, more recent QMC approaches mentioned in \Cref{remark:recent_QMC_constructions}, such as shifted lattice rules based on certain Sobolev spaces rather than on $\BVHK$, are more flexible in this regard, allowing to work with the open cube $(0,1)^{d}$ in place of the closed one.
\end{remark}
	\section{Construction of Transport Maps: Background}
\label{section:ConstructionTransportMaps}

Explicit transport maps from $\simpv$ to $\trv$ can only be constructed in very special cases such as the one discussed in \Cref{example:TransportMapUniformToNormal}.
In most other cases, numerical approximations are necessary.
There are several approaches to the construction of approximate transport maps.
One is to define a finite-dimensional class of possible maps and to optimise a certain functional with respect to the parameters \citep{el2012bayesian,marzouk2016introduction}.
Another approach, which we are going to focus on in this manuscript, is the construction of transport maps via \ac{ODE}s.
The idea is to define an \ac{ODE}
\begin{equation}
	\label{equ:generalODE}
	\dot{x}(t) = v_t(x(t)),
	\qquad
	t\in [0,1],
\end{equation}
with flow maps $\Phi^t$, $t\in[0,1]$, such that $\Phi^{1}$ is the desired transport map from $\simpv$ to $\trv$.
If $x(0) = x_0\sim \simpv$, then consequently $x(1) = \Phi^{1}(x_0)\sim \trv$ by \Cref{remark:ConsequenceOfTransportMapsForSampling}.

For this purpose, a typical strategy is to define a family of `intermediate' probability densities $(\rho_t)_{t\in[0,1]}$ with $\rho_0 = \simpd$ and $\rho_1 = \trd$, e.g.\ via $\rho_t\propto\simpd^{1-t}\,\trd^t$, and find a family of velocity fields $(v_t)_{t\in[0,1]}$ that satisfies the continuity equation
\begin{equation}
	\label{equ:PlainContinuityEquation}
	\partial_t \rho_t
	=
	-\diver(\rho_t v_t),
\end{equation}
which guarantees that $x(t)$ has probability density $\rho_{t}$ for each $t\in [0,1]$, in particular $\Phi_{\#}^{1} \simpv = \trv$.
See \citet[Proposition~2.1]{ambrosio2008transport} and \citet[Proposition~8.1.8]{ambrosio2008gradient} for a precise statement and proof of the continuity equation and its consequences.

The main difficulty of this strategy, which has been implemented by e.g.\ \citet{heng2015gibbs}, lies in approximating the velocity fields $v_t$ that solve the continuity equation \eqref{equ:PlainContinuityEquation}, especially in high dimension.
In \Cref{section:TransportToMixtures} we will construct an explicit formula for $v_t$ for certain mixture distributions $\trv$ and (intermediate) probability densities $(\rho_t)_{t\in[0,1]}$.

\begin{remark}
	There is another class of methods that construct an \ac{ODE} with the property $\Phi_{\#}^\tau \simpv = \trv$ for either $\tau = 1$ or $\tau = \infty$, without (a priori) defining intermediate probability densities $(\rho_t)_{t\in[0,1]}$.
	Instead, the velocity field $v_t$ is chosen in such a way that $\rho_t$ gets closer to $\trd$ in some metric, i.e.\ $x(t)$ is `pushed' in the `right' direction.
	This approach comes with its own challenges such as obtaining some knowledge about the current probability density $\rho_{t}$, with \acl{SVGD} (\acs{SVGD}; \citealp{liu2016stein}) providing one elegant solution.
	Typically, the constructed ODE is solved for a large number of particles simultaneously and requires some form of particle interaction, which can break the delicate structure of many higher-order point constructions.
	The present paper does not follow this approach.
\end{remark}
	
\section{Transport Maps to Mixture Distributions}
\label{section:TransportToMixtures}

Let $\simpv$ be a simple probability distribution with positive density function $\simpd\colon\bR^d\to\bR_{>0}$.
By `simple' we mean that we can produce $\simpv$-distributed \betterpoints with some given rate in the sense of \Cref{def:ConvergenceRate}, e.g.\ sparse grids for $\simpv = \cN(0,\Id_d)$, or that there exists an explicit transport map from such a distribution to $\simpv$.
In this section we discuss how to build a transport map from $\simpv$ to a mixture $\trv$ of shifted and scaled versions of $\simpv$ given by the probability density
\begin{equation}
	\label{equ:MistureDensityScaled}
	\trd (x) = \sum_{j=1}^{J} w_j\, \abs{\det A_j}^{-1}\, \simpd(A_j^{-1}(x-a_j)),
	\quad
	a_j\in\bR^d,
	\quad
	A_j\in \mathbf{S}_{++}^{d},
	\quad
	\sum_{j=1}^{J} w_j = 1,
\end{equation}
in order to obtain \betterpoints from $\trv$ in line with \Cref{prop:TransportedConvergenceRate}.
Here, $\mathbf{S}_{++}^{d}$ denotes the set of symmetric and positive definite matrices $A \in \bR^{d\times d}$.
We will mostly assume the weights $w_j\in\bR$ to be non-negative, while arbitrary weights will be discussed in \Cref{section:MixturesWithNegativeWeights} in the special case of Gaussian mixtures.
Making use of \Cref{lemma:LinearChangeOfVariablesForDensities}, it is straightforward to produce independent samples $Y$ from $\trv$ by the so-called \emph{composition method} \citep[Section~2.3.3]{rubinstein2016simulation}:

\begin{algorithm}[H]
	\caption{Composition method for direct sampling from mixture distributions.}
	\label{alg:direct_sampling_of_mixtures}
	\begin{algorithmic}[1]
		\State
		\textsc{Input}: $\trv$ given by \eqref{equ:MistureDensityScaled}.
		\State
		Draw $X\sim \simpv$.
		\State 
		Draw $\nu$ from the discrete distribution\footnote{Sampling from a discrete distribution $w$ is straightforward:
			Draw $Z\sim\Unif([0,1])$ and set $\nu = j$ whenever $\sum_{k=1}^{j-1} w_k < Z \le \sum_{k=1}^{j} w_k$.} $w = (w_1,\dots,w_J)$, i.e.\ $\bP(\nu = j) = w_j$.
		\State
		\textsc{Output}: $Y = A_{\nu} X + a_{\nu}$.
	\end{algorithmic}
\end{algorithm}

However, as discussed in \Cref{section:Intro}, independent samples $Y_1,\dots,Y_N \iidsim \trv$ provide a rather slow convergence rate of $N^{-1/2}$ in the \ac{MC} approximation \eqref{MCapproximation}, which can be improved by transporting \betterpoints with a better rate from $\simpv$ to $\trv$.

The main result of this paper is the construction of an explicit transport map from $\simpv$ to $\trv$ if $\trv$ is given by \eqref{equ:MistureDensityScaled}.
More precisely, the construction relies on the solution of an explicit \ac{ODE}, as discussed in \Cref{section:ConstructionTransportMaps}, with \emph{analytic} (i.e.\ closed-form) right-hand side.
The continuity equation \eqref{equ:PlainContinuityEquation} guarantees 
the correctness of the method.

\begin{theorem}[Transport to mixtures]
	\label{thm:TransportMixtures}
	Let $\simpd \in C^1(\bR^d;\bR_{>0})$ be a strictly positive probability density function with finite first moment, $M \defeq \int \norm{x} \, \simpd(x)\, \mathrm dx  < \infty$, $\trd$ be given by \eqref{equ:MistureDensityScaled} with $w_j\ge 0$, $j=1,\dots,J$, and $\simpv$ and $\trv$ denote the corresponding probability distributions.
	Further, for $j=1,\dots,J$ and $t\in[0,1]$, let
	\begin{align*}
		A_{j,t}
		& \defeq
		t A_j + (1-t) \Id_d,
		\\
		\rho_{j,t}(x)
		& \defeq
		\abs{\det A_{j,t}}^{-1} \simpd(A_{j,t}^{-1}(x-ta_j)),
		\\
		v_{j,t}(x)
		& \defeq
		a_j + (A_j - \Id_d)A_{j,t}^{-1}(x-ta_j),
	\end{align*}
	and let the densities $\rho_{t} \defeq \sum_{j=1}^{J}w_j\rho_{j,t}$, $t\in [0,1]$ define `intermediate' probability distributions $\bP_{t}$ on $\bR^{d}$.
	Then, for $\simpv$-almost every $x_{0} \in \bR^{d}$, the ODE
	\begin{equation}
		\label{equ:Mixture_transport_ODE}
		\dot{x}(t) = v_t(x(t)),
		\qquad
		x(0) = x_{0},
		\qquad
		v_t
		\defeq
		\rho_{t}^{-1}\sum_{j=1}^{J}w_j\rho_{j,t} v_{j,t},
		\qquad
		t\in [0,1],
	\end{equation}
	admits a globally defined solution on $[0,1]$ and, for every $t\in [0,1]$, the corresponding flow map $\Phi_t\colon\bR^d\to\bR^d$ satisfies $(\Phi_t)_{\#} \simpv = \bP_{t}$.
	In particular, $T = \Phi_1$ is a transport map from $\simpv$ to $\trv$.
\end{theorem}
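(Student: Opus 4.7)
The plan is to verify the continuity equation $\partial_{t} \rho_{t} = -\diver(\rho_{t} v_{t})$ componentwise and then invoke the standard correspondence between continuity equations and flow maps cited in \Cref{section:ConstructionTransportMaps} \citep[Proposition~8.1.8]{ambrosio2008gradient}.

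The first step would be to recognise $\rho_{j,t}$ as the pushforward density of $\simpv$ under the affine map $\psi_{j,t}(x_{0}) \defeq A_{j,t} x_{0} + t a_{j}$, via \Cref{lemma:LinearChangeOfVariablesForDensities}. Differentiating in $t$ and substituting $x_{0} = A_{j,t}^{-1}(\psi_{j,t}(x_{0}) - t a_{j})$ gives $\partial_{t} \psi_{j,t}(x_{0}) = v_{j,t}(\psi_{j,t}(x_{0}))$, so $\psi_{j,t}$ is exactly the flow generated by $v_{j,t}$. Consequently each pair $(\rho_{j,t}, v_{j,t})$ satisfies the componentwise continuity equation $\partial_{t} \rho_{j,t} + \diver(\rho_{j,t} v_{j,t}) = 0$ pointwise (alternatively this can be checked by direct computation using $\simpd \in C^{1}$). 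Summing these identities with weights $w_{j}$ and using the defining relation $\rho_{t} v_{t} = \sum_{j=1}^{J} w_{j}\, \rho_{j,t} v_{j,t}$ yields the target continuity equation $\partial_{t} \rho_{t} = -\diver(\rho_{t} v_{t})$ on $[0,1] \times \bR^{d}$.

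Next I would establish well-posedness of \eqref{equ:Mixture_transport_ODE}. Because $A_{j} \in \mathbf{S}_{++}^{d}$ and $A_{j,t}$ is a convex combination of $\Id_{d}$ and $A_{j}$, the matrix $A_{j,t}$ is positive definite for every $t \in [0,1]$, with $\|A_{j,t}^{-1}\|$ bounded uniformly in $t$. Each $v_{j,t}$ is therefore affine in $x$ with coefficients depending continuously on $t$, and satisfies a bound $\|v_{j,t}(x)\| \le C(1 + \|x\|)$ uniformly in $t \in [0,1]$. Because $\simpd > 0$ everywhere, $\rho_{t} > 0$ pointwise, so $v_{t}$ is a genuine convex combination of the $v_{j,t}$ (with nonnegative coefficients $w_{j} \rho_{j,t}/\rho_{t}$ summing to $1$) and inherits the same linear growth bound, while $\simpd \in C^{1}$ and strict positivity of $\rho_{t}$ make $v_{t}$ be $C^{1}$ in $x$ and continuous in $t$. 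Picard--Lindel\"of together with a Gr\"onwall argument then yields a unique global $C^{1}$ flow $\Phi_{t}$ on $[0,1] \times \bR^{d}$ for \emph{every} $x_{0}$ (in particular for $\simpv$-a.e.\ $x_{0}$).

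Finally, combining the continuity equation with global well-posedness of the flow, \citet[Proposition~8.1.8]{ambrosio2008gradient} gives $(\Phi_{t})_{\#} \simpv = \bP_{t}$ for every $t \in [0,1]$; specialising to $t=1$ yields $T = \Phi_{1}$ as a transport map from $\simpv$ to $\trv$. The step I expect to require most care is the integrability hypothesis $\int_{0}^{1} \int \|v_{t}\|\, \rho_{t}\, \rd x\, \rd t < \infty$ of that proposition: this is precisely where the assumption $M < \infty$ enters, since it forces each $\bP_{j,t}$ to have finite first moment (as the pushforward of $\simpv$ under an affine map with bounded coefficients), hence so does $\bP_{t}$, and the linear growth bound on $v_{t}$ then gives uniform integrability of $v_{t}$ against $\rho_{t}$ on $[0,1]$.
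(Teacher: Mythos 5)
Your proposal is correct and follows essentially the same route as the paper: the componentwise continuity equation for $(\rho_{j,t},v_{j,t})$ via the affine flow $x_{0}\mapsto A_{j,t}x_{0}+ta_{j}$ is exactly the argument of \Cref{remark:TechnicalContinuityEquationMixtures}, the weighted summation and the appeal to \citet[Proposition~8.1.8]{ambrosio2008gradient} (\Cref{thm:continuity_equation_Ambrosio_version}) are identical, and your use of $M<\infty$ matches the paper's verification of the integrability hypothesis \ref{item:continuity_equation_technical_assumption_1}. The only (harmless) difference is that you additionally obtain a flow for \emph{every} $x_{0}$ via Picard--Lindel\"of and the linear growth bound, whereas the paper settles for the $\simpv$-almost-every statement delivered directly by the cited proposition after checking its local Lipschitz condition \ref{item:continuity_equation_technical_assumption_2}.
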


\begin{proof}
	The proof is given in \Cref{section:Appendix}.
\end{proof}

The application of the above transport approach to \betterpoints is summarised in the following algorithm, which is the primary method suggested and analysed in this paper.

\begin{algorithm}[H]
	\caption{ODE transport for \betterpoints from mixture distributions.}
	\label{alg:ODE_sampling_of_mixtures}
	\begin{algorithmic}[1]
		\State
		\textsc{Input}: set of  \betterpoints{}\footref{footnote:initial_points} 
		(e.g.\ QMC points) $(X_{n})_{n = 1,\dots,N}$ in $\bR^{d}$; $\trv$ given by \eqref{equ:MistureDensityScaled}.
		\State
		For each $n = 1,\dots,N$, solve the ODE \eqref{equ:Mixture_transport_ODE} with initial point $x(0) = X_{n}$ and set $Y_{n} \defeq x(1)$.
		\State
		\textsc{Output}: $(Y_{n})_{n = 1,\dots,N}$,
	\end{algorithmic}
\end{algorithm}

\footnotetext{\label{footnote:initial_points} We assume these points to be $\simpv$-distributed in the sense of \Cref{def:ConvergenceRate}.
As mentioned in the beginning of this section, this might require a prior application of some explicit transport map (e.g.\ $T^{\cN}$ from \Cref{example:TransportMapUniformToNormal} if $\simpv = \cN(0,\Id_{d})$).}

\begin{corollary}
	Let $d\in\bN$, let $\Omega = \bR^d$, and let the sequence $\fX = (X^{(N_{k})})_{k \in \bN}$ of families $X^{(N_{k})} = (X_{n}^{(N_{k})})_{n = 1,\dots,N_{k}}$ in $\bR^{d}$ have $(\bP,\cC,\fN,\fw)$-convergence rate $r\colon \bN\to\bR_{>0}$ for some quantities $\bP,\cC,\fN,\fw$ as in \Cref{def:ConvergenceRate}.
	Then $\fY = (Y^{(N_{k})})_{k\in\bN}$ produced by \Cref{alg:ODE_sampling_of_mixtures} (when applied to each point set $X^{(N_{k})}$, $k\in\bN$, separately) has $(\bP,\cC_{T},\fN,\fw)$-convergence rate $r$.
\end{corollary}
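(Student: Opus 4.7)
The plan is to recognise this corollary as an immediate consequence of \Cref{thm:TransportMixtures} together with \Cref{prop:TransportedConvergenceRate}, requiring no new estimates.

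First, I would invoke \Cref{thm:TransportMixtures} applied to the reference density $\simpd$ and the mixture density $\trd$ from \eqref{equ:MistureDensityScaled} to obtain a globally defined flow map $T \defeq \Phi_{1}\colon \bR^{d} \to \bR^{d}$ satisfying $T_{\#} \simpv = \trv$. Next, observe that by construction \Cref{alg:ODE_sampling_of_mixtures} produces exactly the transported family $Y_{n}^{(N_{k})} = T(X_{n}^{(N_{k})})$ for each $k \in \bN$ and $n = 1, \dots, N_{k}$, since evaluating the flow of \eqref{equ:Mixture_transport_ODE} at $t = 1$ from the initial condition $x(0) = X_{n}^{(N_{k})}$ is precisely what the algorithm performs on each input point.

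I would then apply \Cref{prop:TransportedConvergenceRate} with $\bP_{0} = \simpv$, $\bP_{1} = \trv$, the function class $\cC$ as in the hypothesis, and the transport map $T$ just constructed. The assumption on $\fX$ supplies the required $(\simpv, \cC)$-convergence rate (this is the natural reading of the symbol $\bP$ appearing in the corollary statement, since $\fX$ is the input fed into \Cref{alg:ODE_sampling_of_mixtures}), and the conclusion of \Cref{prop:TransportedConvergenceRate} transfers this rate to $\fY$ as a $(\trv, \cC_{T})$-convergence rate. The weight sequence $\fw$ and the size sequence $\fN$ pass through unchanged because the change-of-variables identity \eqref{equ:ChangeOfVariablesFormula} is an equality of weighted sums with the same number of terms and the same weights.

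I do not anticipate any real obstacle, since the substantive work has already been done in \Cref{thm:TransportMixtures} (constructing the velocity field and verifying the continuity equation) and in \Cref{prop:TransportedConvergenceRate} (the change-of-variables argument). The only minor bookkeeping point is that \Cref{prop:TransportedConvergenceRate} is phrased for a single sequence whereas the corollary concerns families indexed by $k$; this is purely cosmetic, since \Cref{def:ConvergenceRate} is itself formulated in terms of such families and the transport map $T$ is the same for every $k$, so applying the proposition independently for each $k$ yields the desired uniform statement.
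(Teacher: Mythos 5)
Your argument is exactly the paper's proof, spelled out in more detail: the paper also derives this corollary directly from \Cref{thm:TransportMixtures} (which supplies the transport map $T = \Phi_1$ realised by \Cref{alg:ODE_sampling_of_mixtures}) combined with \Cref{prop:TransportedConvergenceRate}. The proposal is correct and no gaps remain.
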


\begin{proof}
	This is a direct consequence of \Cref{prop:TransportedConvergenceRate,thm:TransportMixtures}.
\end{proof}

\begin{figure}[p]
	\centering
	\begin{subfigure}[b]{0.32\textwidth}
		\centering
		\includegraphics[width=\textwidth]{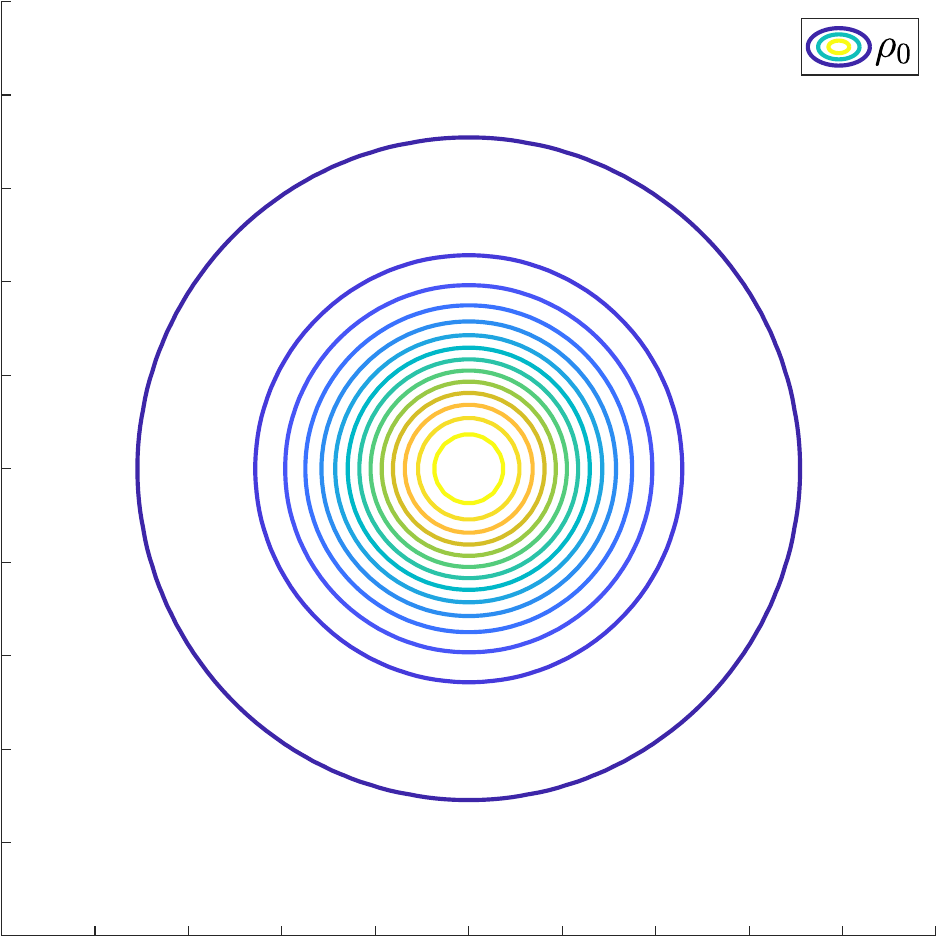}
	\end{subfigure}
	\hfill
	\begin{subfigure}[b]{0.32\textwidth}
		\large
		\centering            
		$\xrightarrow[\text{constructed via the ODE \eqref{equ:Mixture_transport_ODE}}]{\text{Transport map $T = \Phi_{1}$ from $\bP_0$ to $\bP_1$}}$
		\vspace{2cm}
	\end{subfigure}
	\hfill
	\begin{subfigure}[b]{0.32\textwidth}
		\centering
		\includegraphics[width=\textwidth]{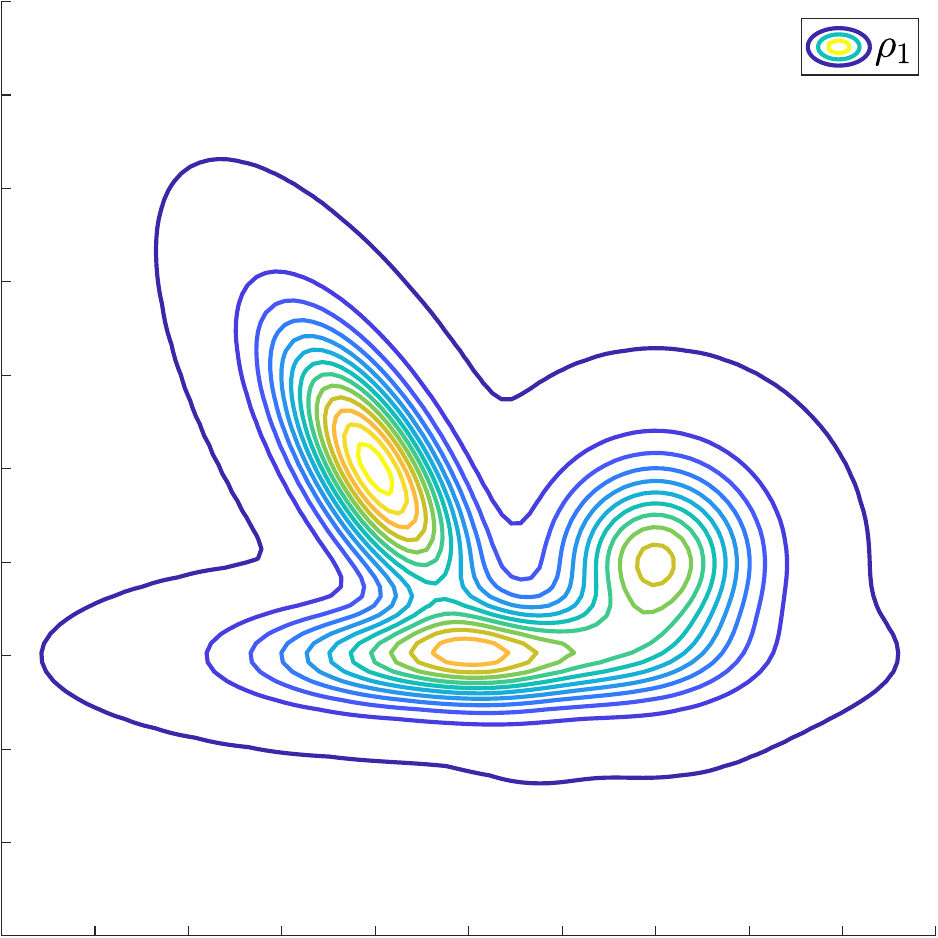}
	\end{subfigure}
	\vfill
	\begin{subfigure}[b]{0.32\textwidth}
		\centering
		\includegraphics[width=\textwidth]{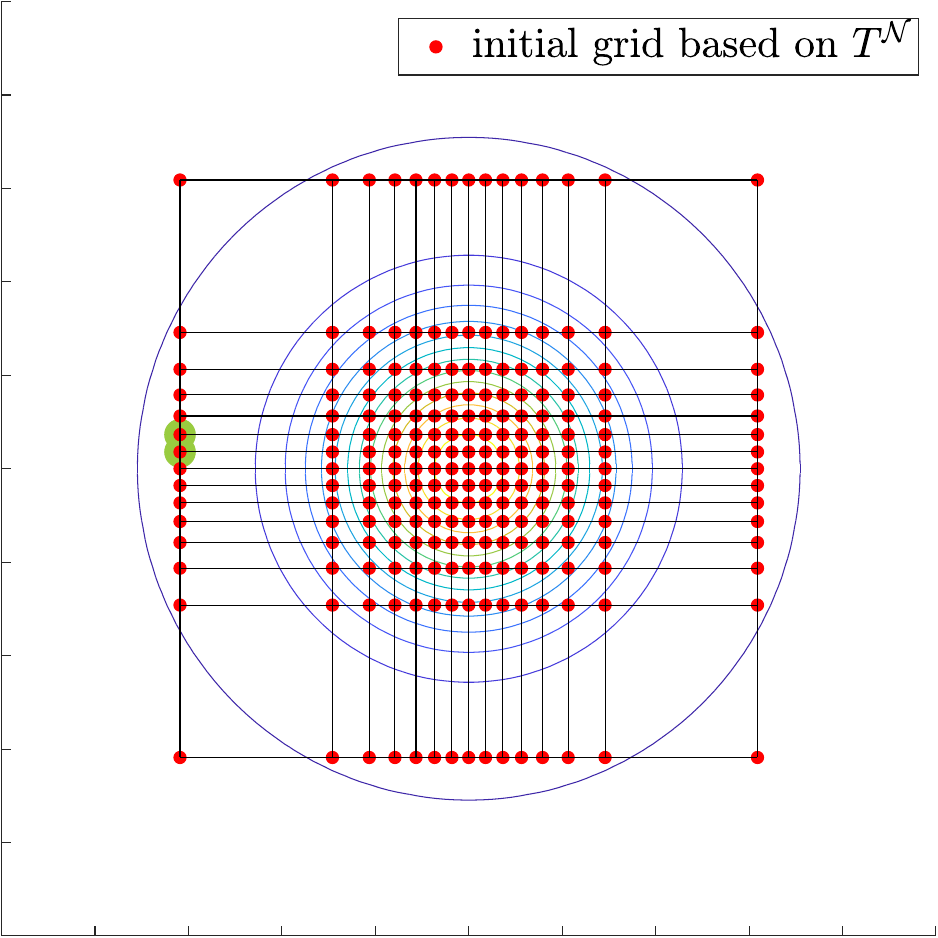}
	\end{subfigure}
	\hfill
	\begin{subfigure}[b]{0.32\textwidth}
		\centering
		\includegraphics[width=\textwidth]{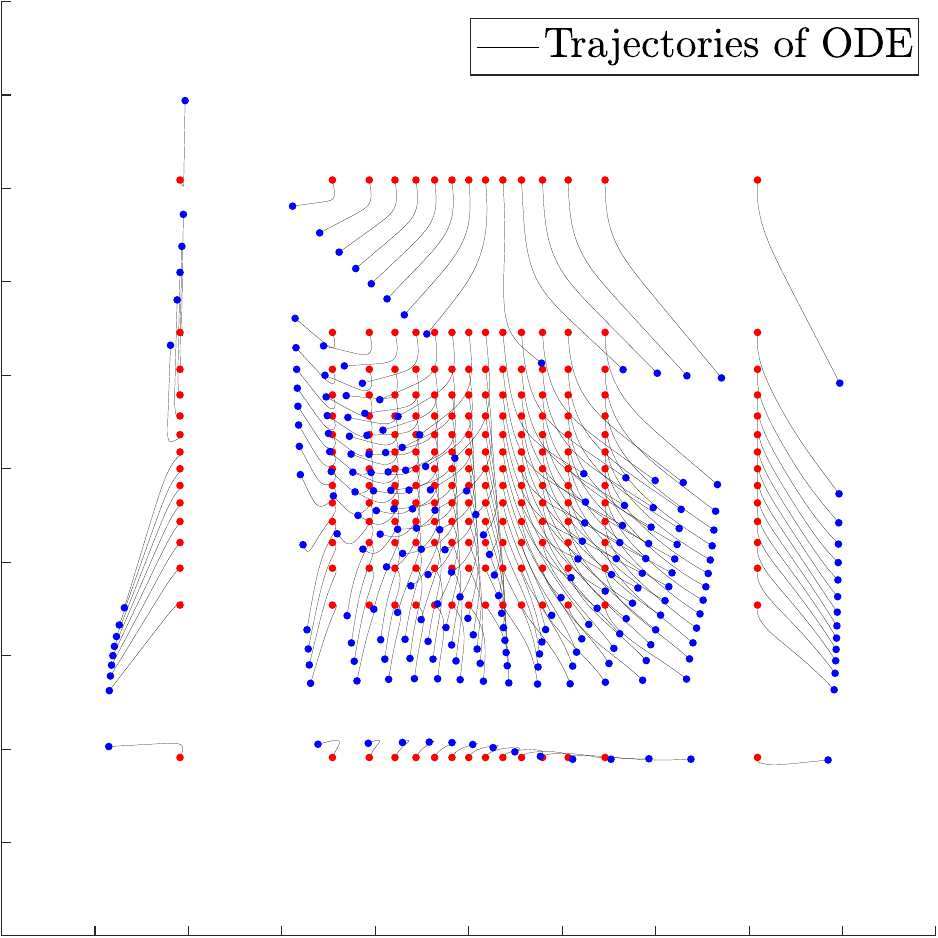}
	\end{subfigure}
	\hfill
	\begin{subfigure}[b]{0.32\textwidth}
		\centering
		\includegraphics[width=\textwidth]{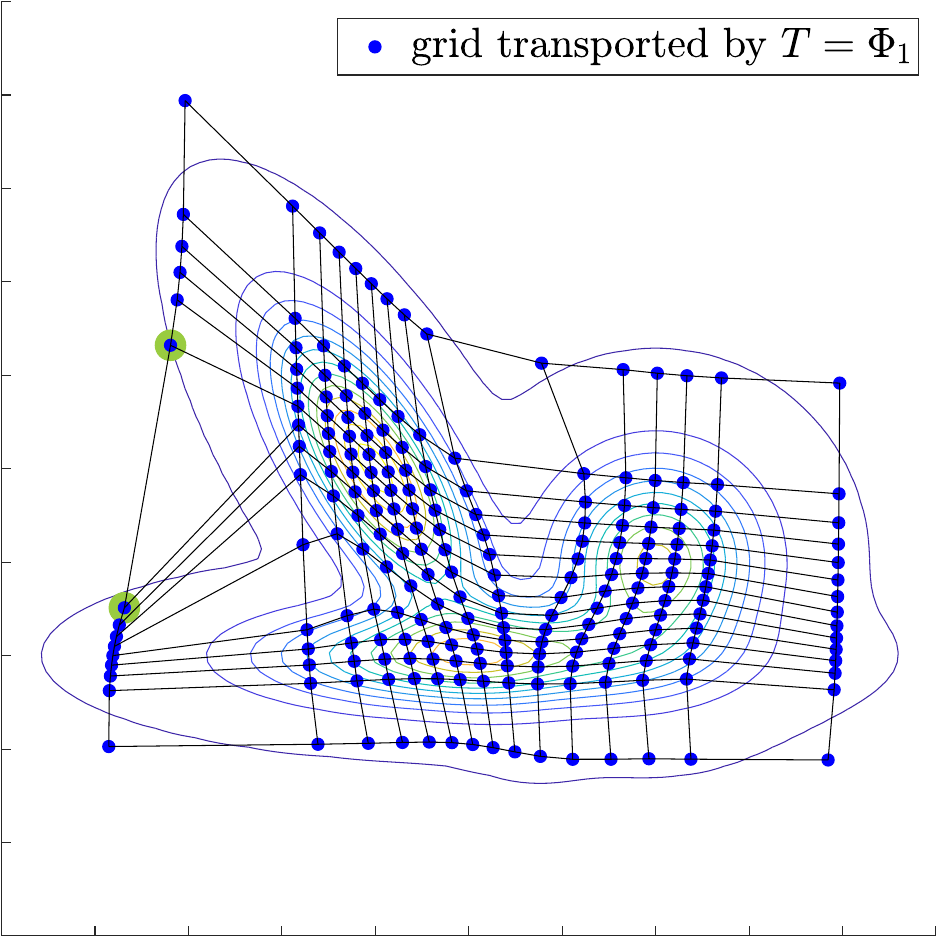}
	\end{subfigure}
	\vfill
	\begin{subfigure}[b]{0.32\textwidth}
		\centering
		\includegraphics[width=\textwidth]{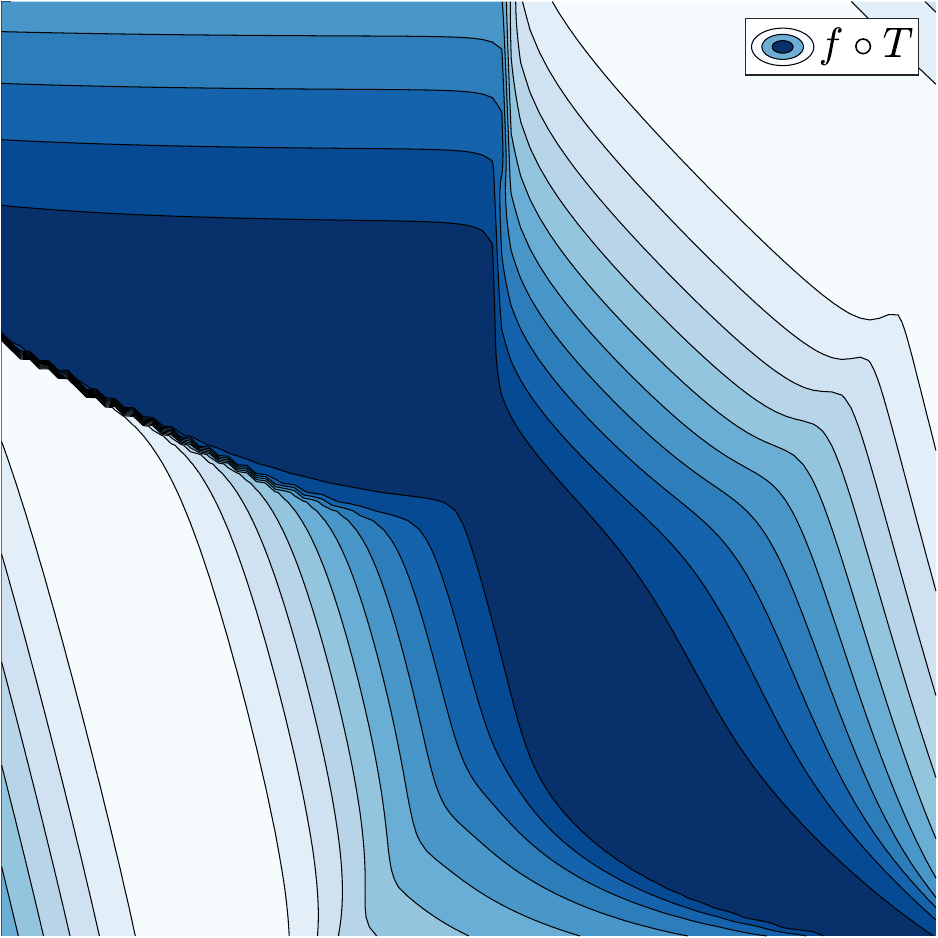}
	\end{subfigure}
	\hfill
	\begin{subfigure}[b]{0.32\textwidth}
		\large
		\centering            
		$\xleftarrow[]{\text{Function $f$ `pulled back' by $T = \Phi_{1}$}}$
		\vspace{2cm}
	\end{subfigure}
	\hfill
	\begin{subfigure}[b]{0.32\textwidth}
		\centering
		\includegraphics[width=\textwidth]{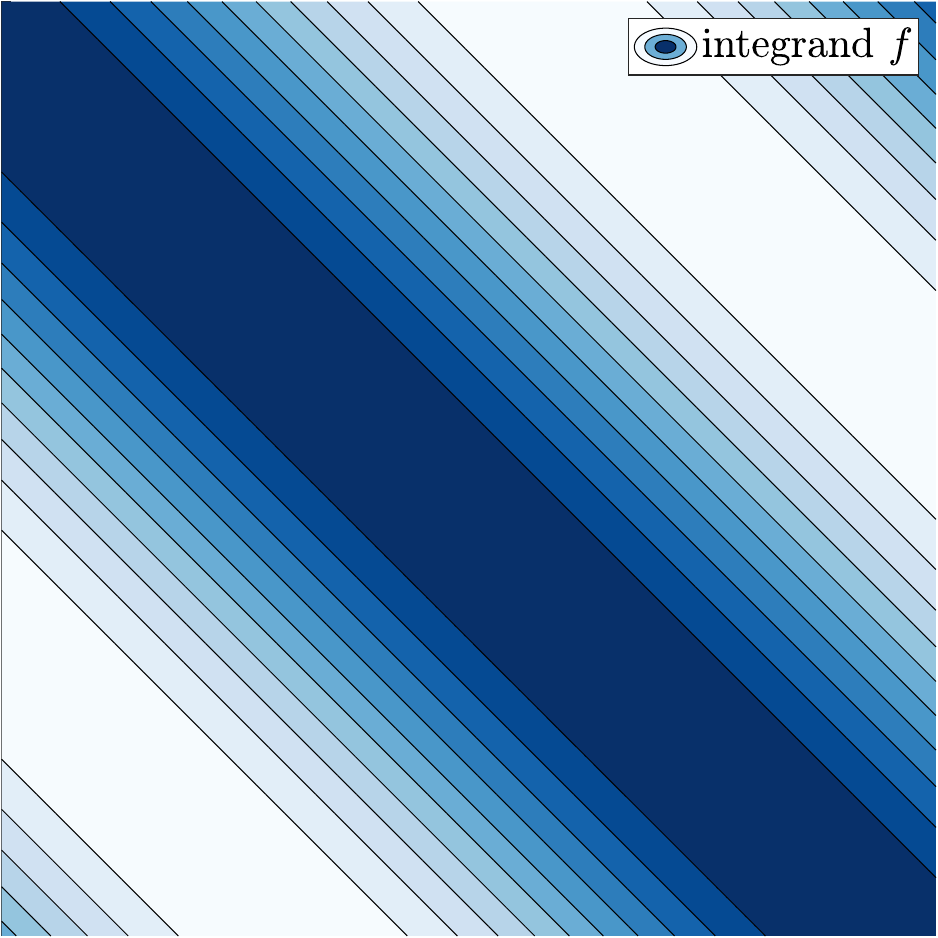}
	\end{subfigure}        
	\caption{We illustrate the transport map $T = \Phi_{1}$ established in \Cref{thm:TransportMixtures} by transporting the grid constructed in \Cref{example:TransportMapUniformToNormal} (see \Cref{figure:TransportMapUniformToNormal}) to a mixture of three scaled and shifted Gaussian densities given by \eqref{equ:Example_mixture_three_Gaussians}.
		By the dual viewpoint \eqref{equ:ChangeOfVariablesFormula}, approximating $\bE_{\trv}[f(Y)]$ using the transported points $Y_{n} = T(X_n)$ is equivalent to approximating $\bE_{\simpv}[f\circ T]$ using the original points $X_{n}$.
		We illustrate how the function $f(y) = \cos( 0.3 + (y_{1} + y_{2})/2)$ is pulled back to the function $f\circ T$ via the same transport map $T = \Phi_{1}$.
		Note how the two neighbouring grid points marked in green can end up rather far apart after being transported.
		As a result, the function $f\circ T$ becomes almost discontinuous in the corresponding region, cf.\ \Cref{rem:pulled_back_function_almost_discontinuous}.
	}
	\label{fig:transport_map_to_Gaussian_mixture_grid_and_pulled_back_function}
\end{figure}

Again, we wish to stress that, once $J$ is no longer considerably smaller than $N$, transporting $N$ \betterpoints using this scheme is preferable to transporting $J$ collections of (roughly) $\lfloor w_{j} N \rfloor$ \betterpoints, one collection per mixture component;
see also \Cref{remark:QMC_points_from_each_component_separately} and \Cref{fig:MC_QMC_error_over_J_dim_2}.
Such a `componentwise' approach was recently suggested by \citet{Cui2023quasimonte} and we sketch the corresponding algorithm here, which relies on \Cref{lemma:LinearChangeOfVariablesForDensities}:

\begin{algorithm}[H]
	\caption{Componentwise transport for \betterpoints from mixture distributions.}
	\label{alg:componentwise_transport_sampling_of_mixtures}
	\begin{algorithmic}[1]
		\State
		\textsc{Input}: set of \betterpoints{}\footref{footnote:initial_points}
		(e.g.\ QMC points) $(X_{n})_{n = 1,\dots,N}$ in $\bR^{d}$; $\trv$ given by \eqref{equ:MistureDensityScaled}.
		\State 
		Define the `Diophantine approximation' $M_{j} = \lfloor  w_{j}N \rfloor$, $j \leq J-1$, and $M_{J} = N-\sum_{j=1}^{J-1} M_j$.%
		\State
		\textsc{Output}: point set $\set{Y_{n}^{(j)} \defeq A_{j}X_{n} + a_{j}}{j=1,\dots,J,\ n=1,\dots,N_{j}}$, where each point $Y_{n}^{(j)}$ has the weight $w_{j}/M_{j}$.
	\end{algorithmic}
\end{algorithm}

\begin{terminology}
	We refer to the points (or sequences of point families) produced by \Cref{alg:ODE_sampling_of_mixtures} as \emph{transported \betterpoints}, in particular as \emph{transported quasi-Monte Carlo} (\ac{TQMC}) points and \emph{transported sparse grids} (\ac{TSG}).
	Correspondingly, \emph{componentwise \betterpoints}, \emph{componentwise quasi-Monte Carlo} (CQMC) points, and \emph{componentwise sparse grids} (CSG) will refer to the output of \Cref{alg:componentwise_transport_sampling_of_mixtures}.
\end{terminology}

\begin{figure}[p]
	\centering         	
	\begin{subfigure}[b]{0.32\textwidth}
		\centering
		\includegraphics[width=\textwidth]{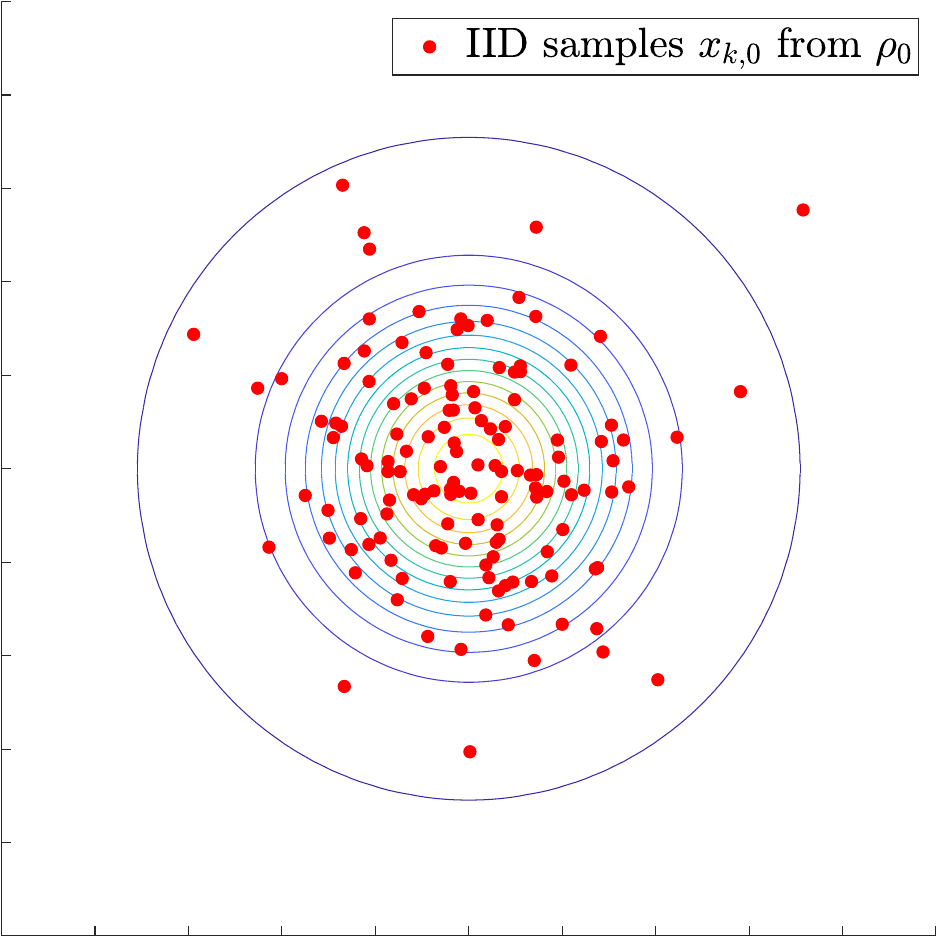}
	\end{subfigure}
	\hfill
	\begin{subfigure}[b]{0.32\textwidth}
		\centering
		\includegraphics[width=\textwidth]{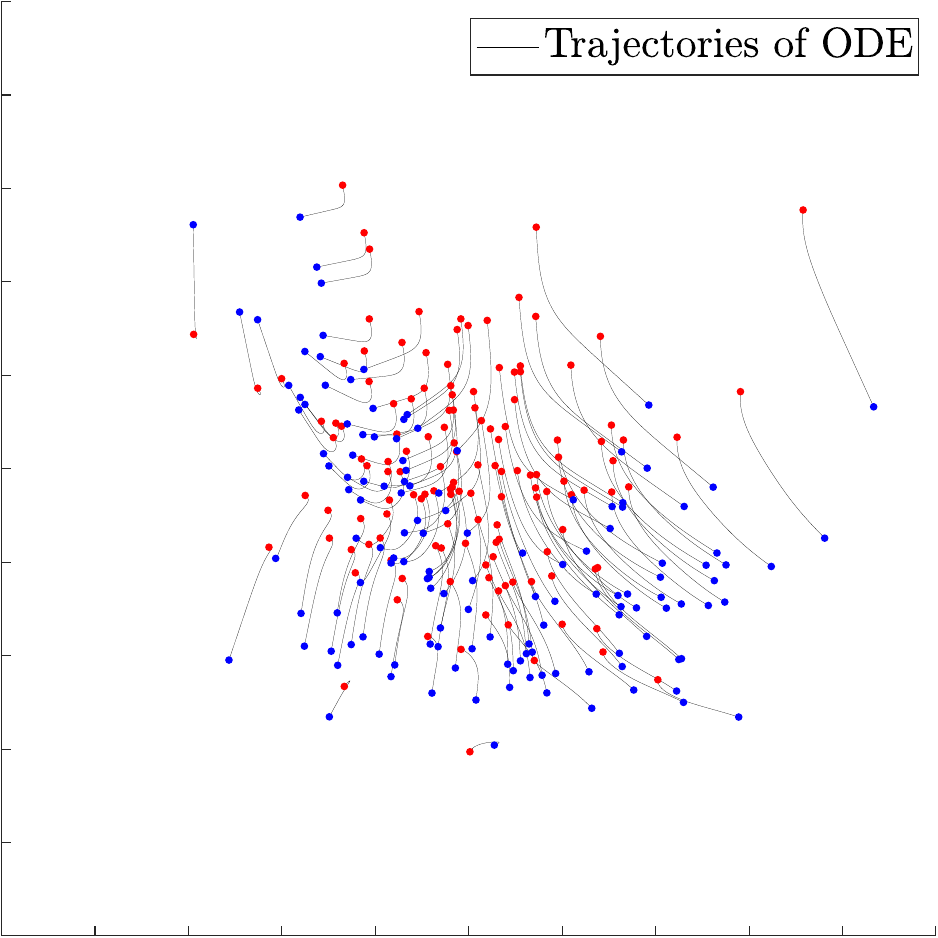}
	\end{subfigure}
	\hfill
	\begin{subfigure}[b]{0.32\textwidth}
		\centering
		\includegraphics[width=\textwidth]{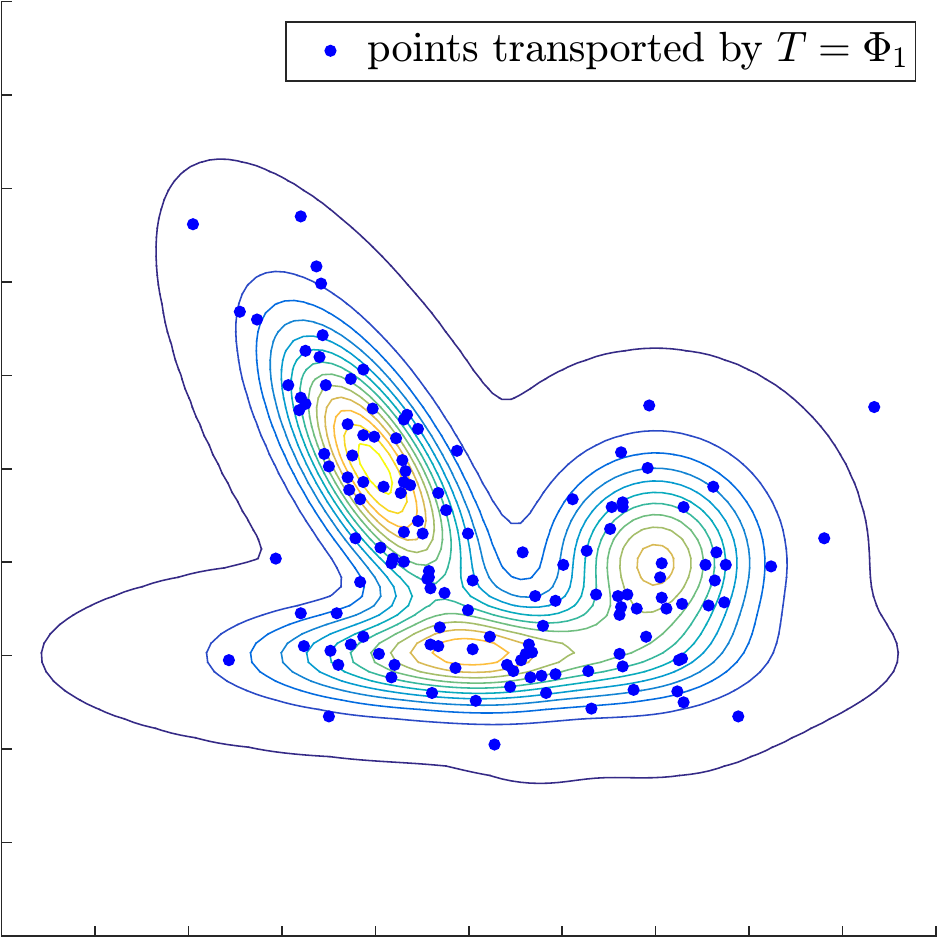}
	\end{subfigure}
	\vfill
	\begin{subfigure}[b]{0.32\textwidth}
		\centering
		\includegraphics[width=\textwidth]{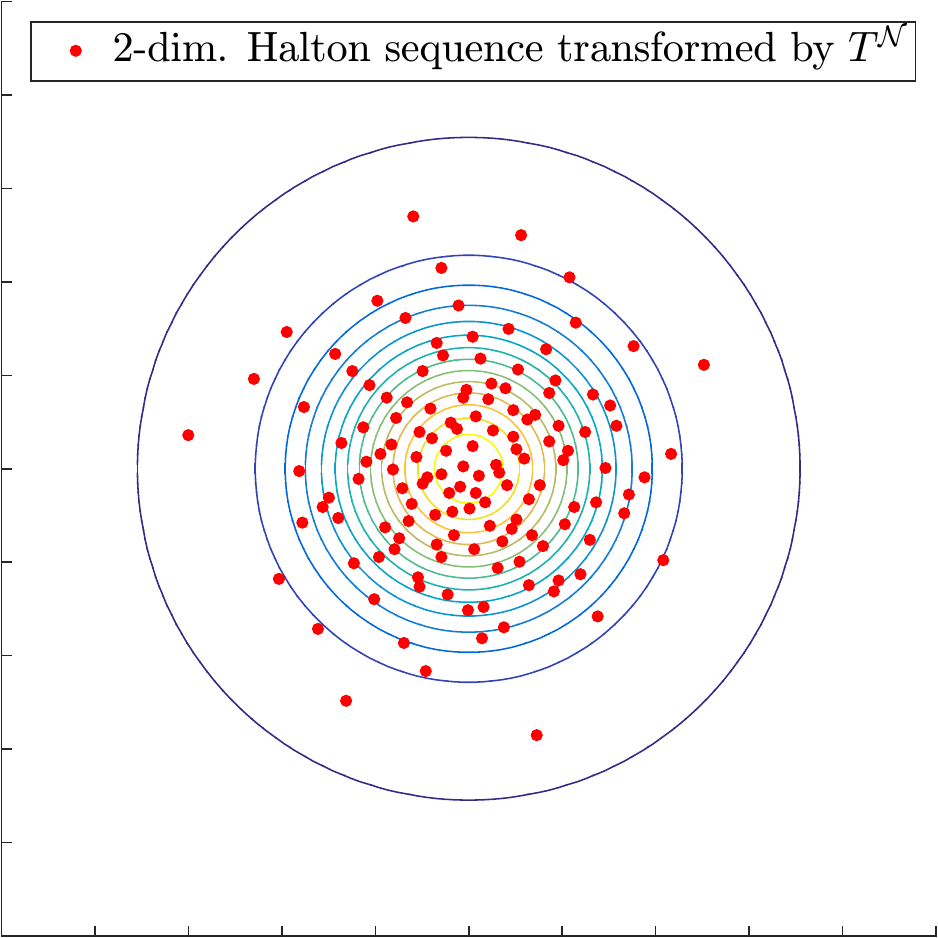}
	\end{subfigure}
	\hfill
	\begin{subfigure}[b]{0.32\textwidth}
		\centering
		\includegraphics[width=\textwidth]{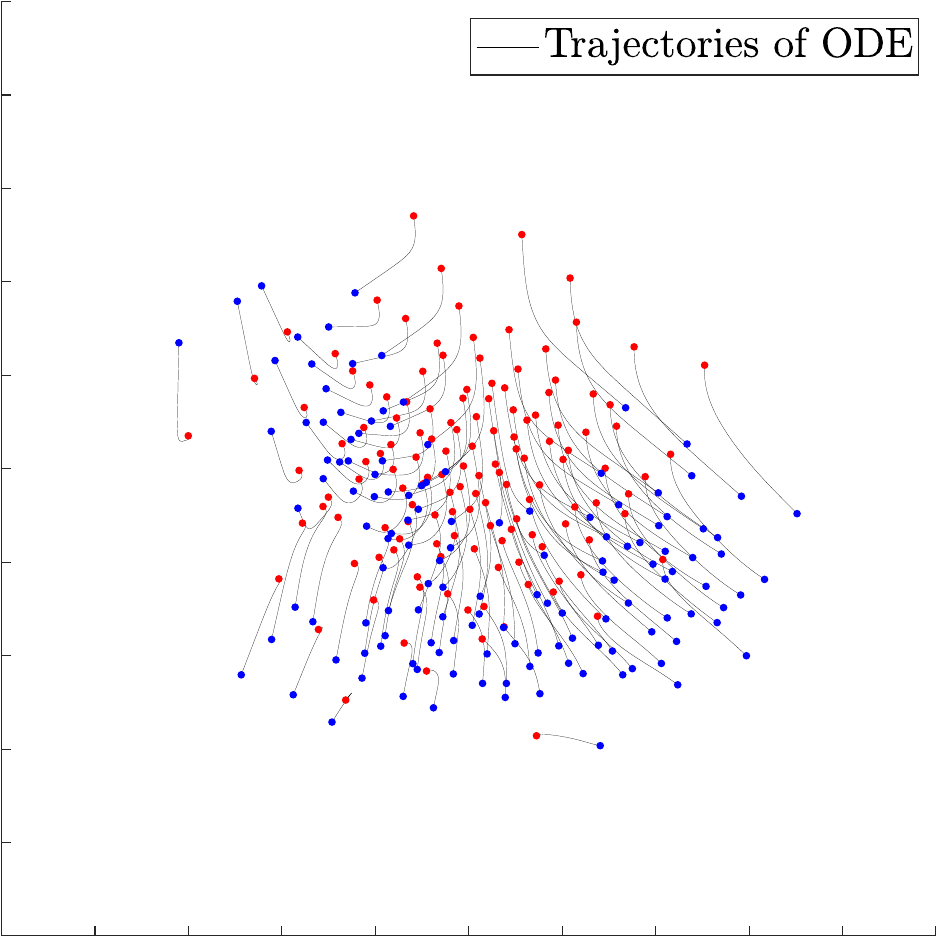}
	\end{subfigure}
	\hfill
	\begin{subfigure}[b]{0.32\textwidth}
		\centering
		\includegraphics[width=\textwidth]{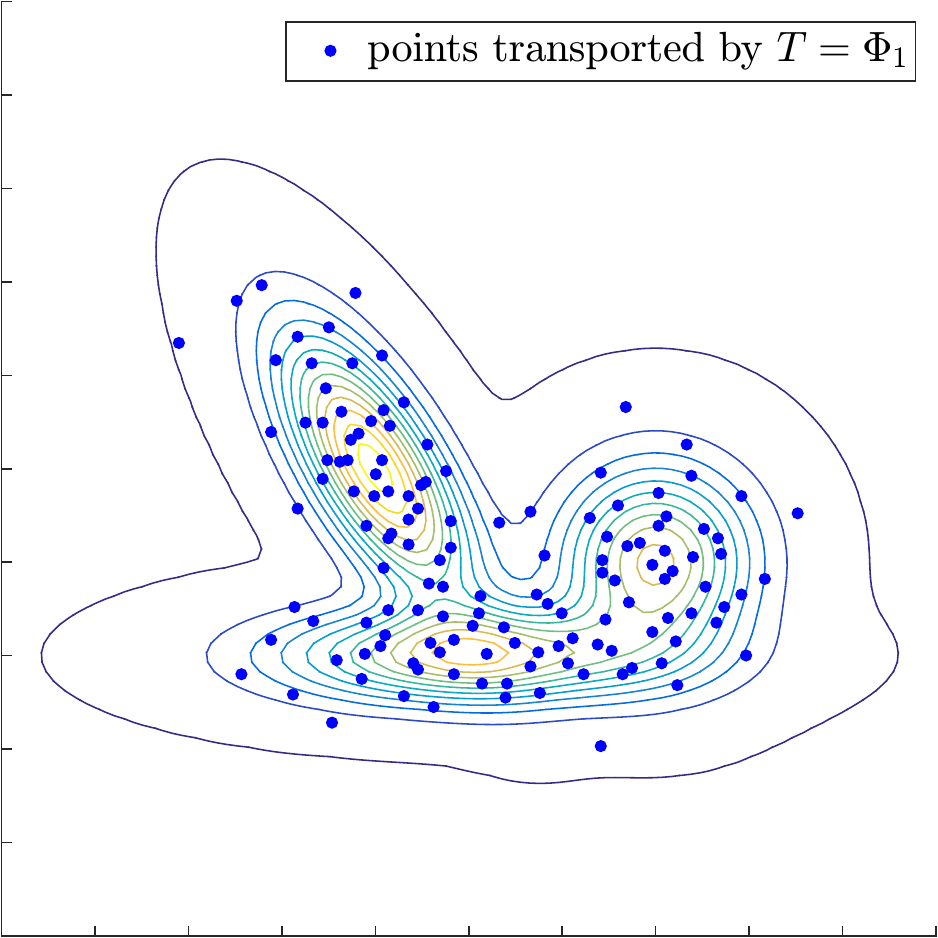}
	\end{subfigure}
	\vfill
	\begin{subfigure}[b]{0.32\textwidth}
		\centering
		\includegraphics[width=\textwidth]{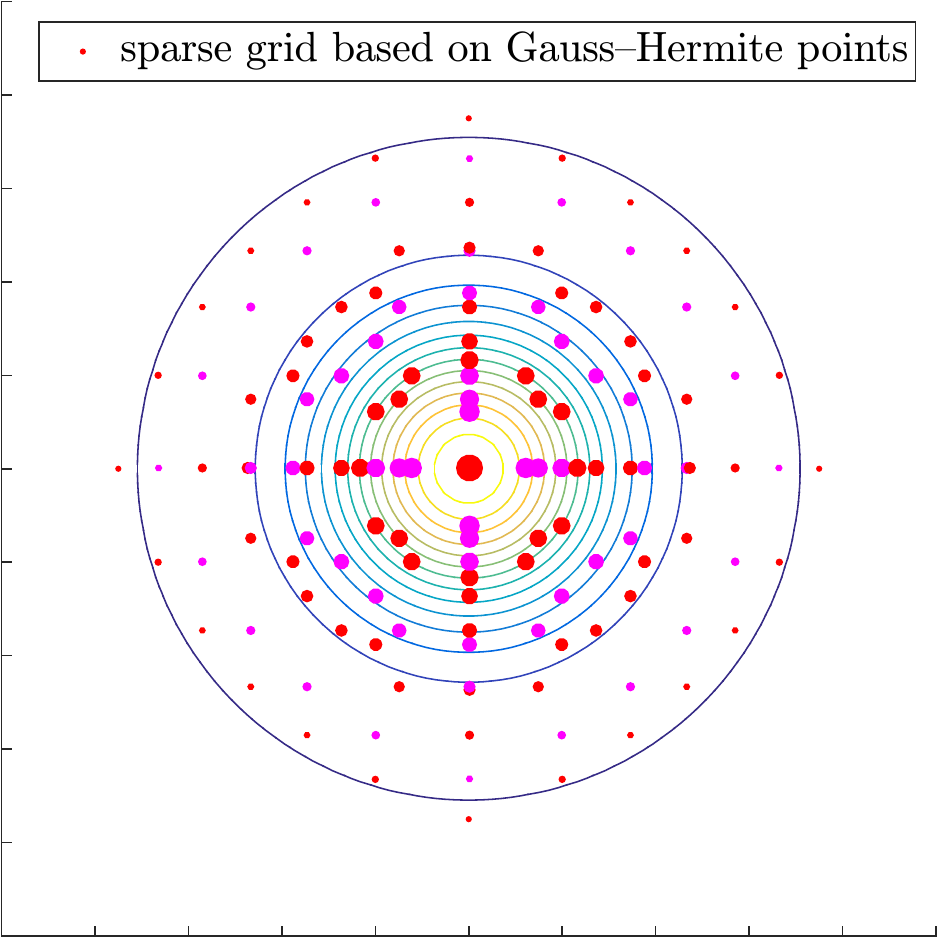}
	\end{subfigure}
	\hfill
	\begin{subfigure}[b]{0.32\textwidth}
		\centering
		\includegraphics[width=\textwidth]{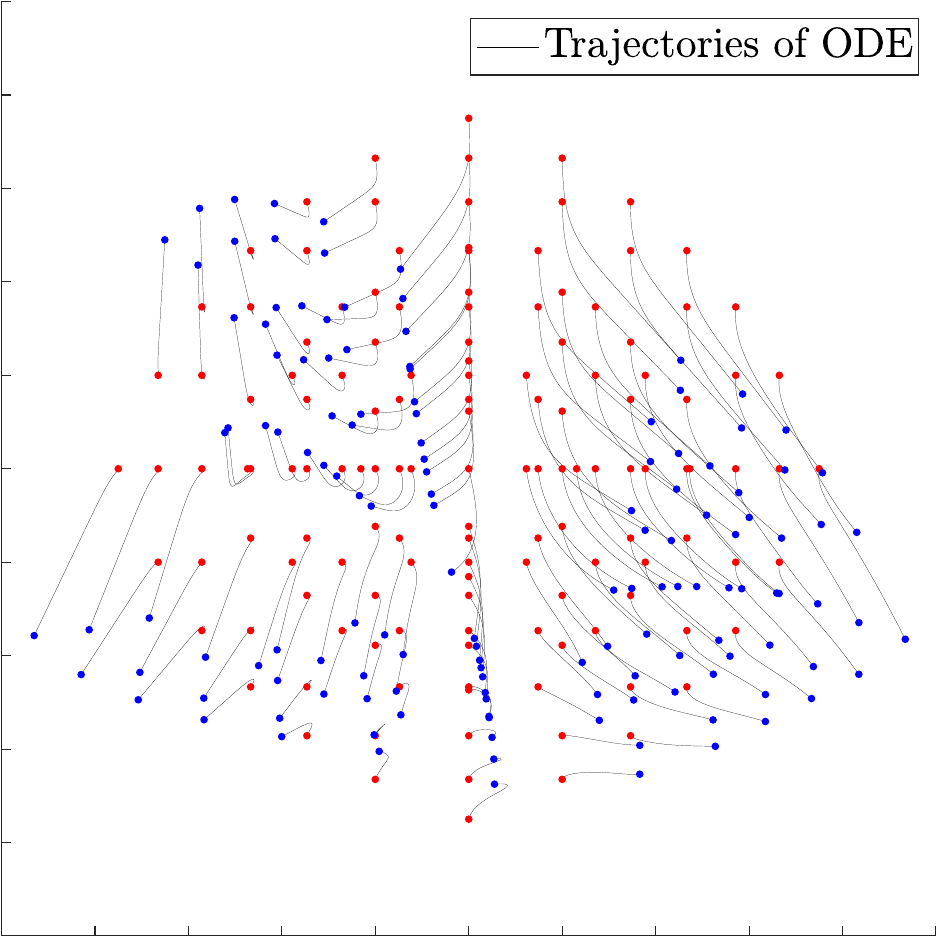}
	\end{subfigure}
	\hfill
	\begin{subfigure}[b]{0.32\textwidth}
		\centering
		\includegraphics[width=\textwidth]{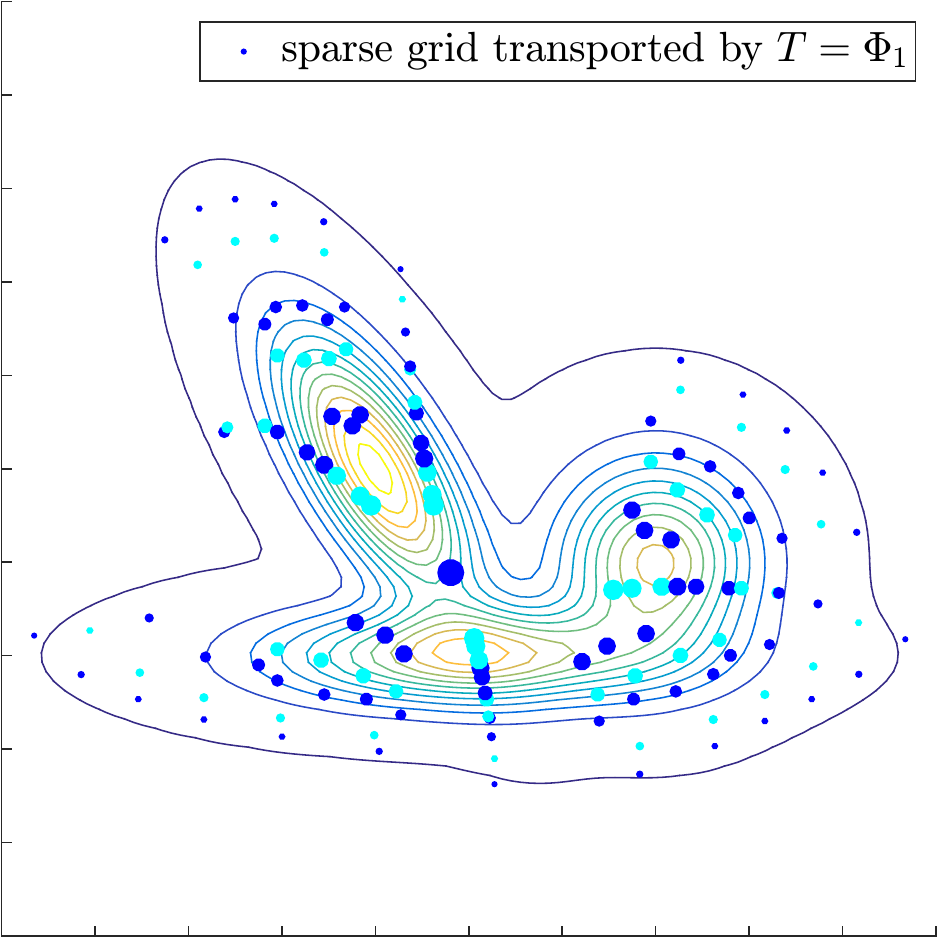}
	\end{subfigure}
	\caption{Transporting different types of \betterpoints from the initial distribution $\simpv$ to the Gaussian mixture target $\trv$ given by \eqref{equ:Example_mixture_three_Gaussians} by \Cref{alg:ODE_sampling_of_mixtures}: Monte-Carlo points (\ac{IID}), \ac{QMC} points (2-dimensional Halton sequence transformed by $T^{\cN}$) and sparse grids (level 6, based on Gauss--Hermite points to match the initial distribution).
	For sparse grids, the color and size of a point indicate its weight (red and blue for positive weights, magenta and cyan for negative weights).
	In all three cases, a total number of 137 points were transported.
	}
	\label{fig:TransportingMCqMCsparseGridsToMixture}
\end{figure}

\begin{example}
	\label{example:Transport_mixture_three_Gaussians}
	Consider a Gaussian mixture distribution $\trv$ given by \eqref{equ:MistureDensityScaled} with standard normal distribution $\simpv$ in dimension $d=2$, $J=3$, $w = (\nicefrac{3}{10} , \nicefrac{4}{10} , \nicefrac{3}{10})$ and
	\begin{equation}
		\label{equ:Example_mixture_three_Gaussians}
		\begin{split}
			\begin{aligned}
				a_1 & = \begin{pmatrix} 2 \\ -1\end{pmatrix},
				&
				a_2 & = \begin{pmatrix} -1 \\ 0\end{pmatrix},
				&
				a_3 & = \begin{pmatrix} 0 \\ -2\end{pmatrix},
				\\
				A_1
				& =
				\frac{2}{3}\, 
				\begin{pmatrix}
					1 & 0\\ 0 & 1
				\end{pmatrix},
				\qquad
				&
				A_2
				& =
				\frac{2}{3}\, 
				\begin{pmatrix}
					1 & 0\\ -1 & 1
				\end{pmatrix},
				\qquad
				&
				A_3
				& =
				\tfrac{2}{3}\, 
				\begin{pmatrix}
					2 & 0\\ 0 & \nicefrac{1}{2}
				\end{pmatrix}.
			\end{aligned}
		\end{split}
	\end{equation}
	\Cref{fig:transport_map_to_Gaussian_mixture_grid_and_pulled_back_function} illustrates the transport map established in \Cref{thm:TransportMixtures} to the grid constructed in \Cref{example:TransportMapUniformToNormal}, while \Cref{fig:TransportingMCqMCsparseGridsToMixture} visualises the application of \Cref{alg:ODE_sampling_of_mixtures} to various types of \betterpoints $X_{1},\dots,X_{N}$ with distribution $\simpv$ (MC, \ac{QMC}, and \ac{SG}).
	Both figures show how the ODE \eqref{equ:Mixture_transport_ODE} transports the corresponding points from $\simpv$ to $\trv$, and the bottom line of \Cref{fig:transport_map_to_Gaussian_mixture_grid_and_pulled_back_function} additionally demonstrates how the integrand $f$ is `pulled back' by $T$ to the function $f\circ T$.
	Further, in order to compare the performance of \ac{MC}, \ac{TQMC}, and \ac{TSG}, \Cref{fig:Three_Gaussians_Convergence_Plots} shows convergence plots for the quadrature problem $\bE_{Y\sim \trv}[f(Y)]$ with three different integrands $f$.
\end{example}

\begin{remark}
	\label{rem:pulled_back_function_almost_discontinuous}
	The transport map $T = \Phi_{1}$ constructed in \Cref{thm:TransportMixtures} can have some undesirable features.
	In particular, note how the two neighbouring grid points marked in green in \Cref{fig:transport_map_to_Gaussian_mixture_grid_and_pulled_back_function} are mapped by $T$ to rather distant points.
	As a consequence, the pulled back function $f \circ T$ is almost discontinuous in the corresponding region, which can cause challenges for certain quadrature rules.
	Such effects are inevitable:
	The reader may think of a transport map from a two-dimensional Gaussian distribution to an equal mixture of two Gaussians with far apart centers --- clearly, neighbouring points have to be `torn apart' along some line.
	In certain cases, adaptive schemes can provide a remedy for this issue.
	Note that our transport map relies on the solution of an ODE with \emph{analytic} right-hand side.
	Hence, no approximation of intermediate densities or other forms of particle interactions, as is typical for other methods, are necessary.
	Therefore, our approach can be trivially extended to such adaptive schemes, e.g.\ adaptive sparse grids.
\end{remark}

\begin{figure}[t]
	\centering         	
	\begin{subfigure}[b]{0.32\textwidth}
		\centering
		\includegraphics[width=\textwidth]{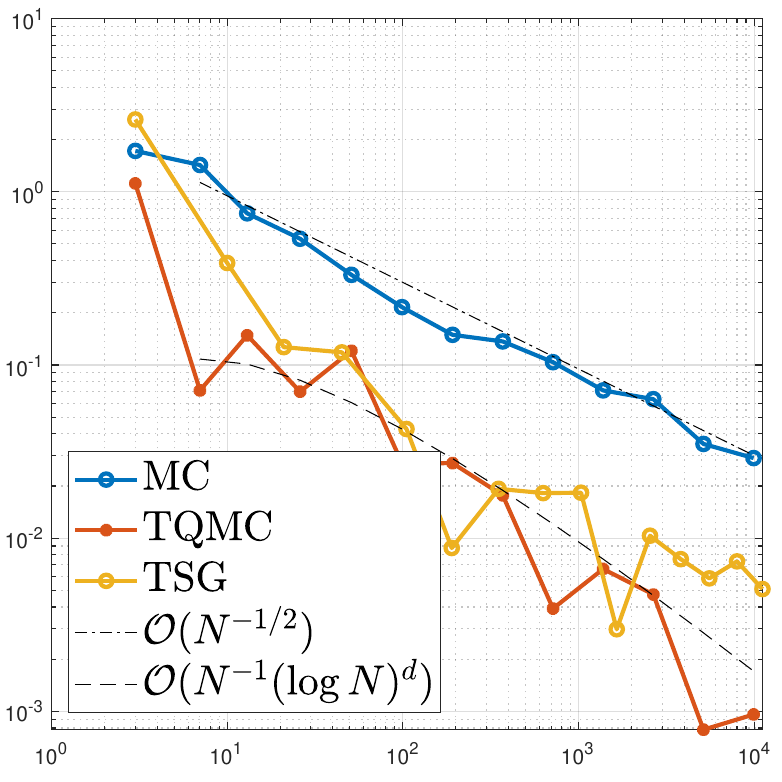}
	\end{subfigure}
	\hfill
	\begin{subfigure}[b]{0.32\textwidth}
		\centering
		\includegraphics[width=\textwidth]{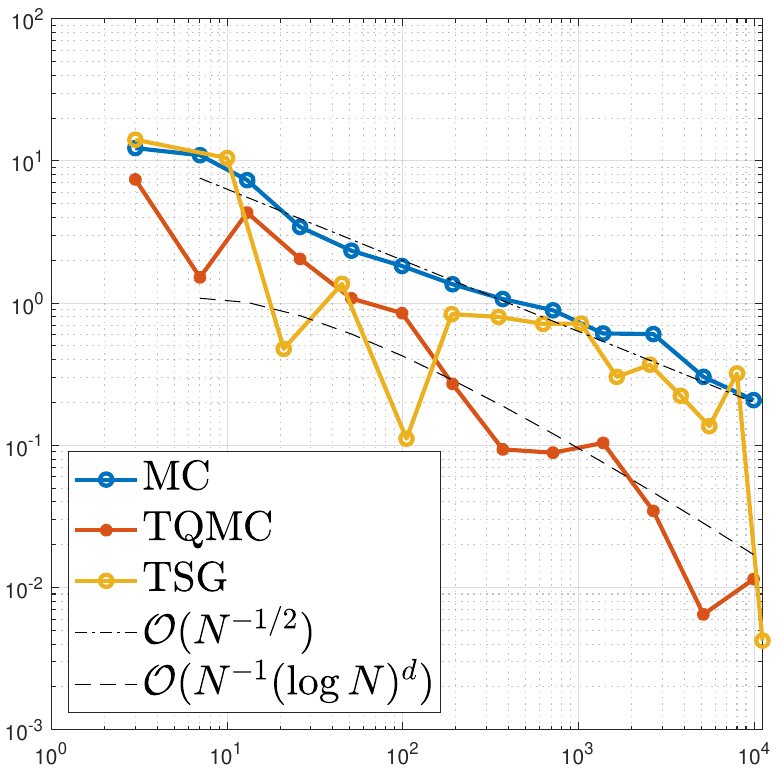}
	\end{subfigure}
	\hfill
	\begin{subfigure}[b]{0.32\textwidth}
		\centering
		\includegraphics[width=\textwidth]{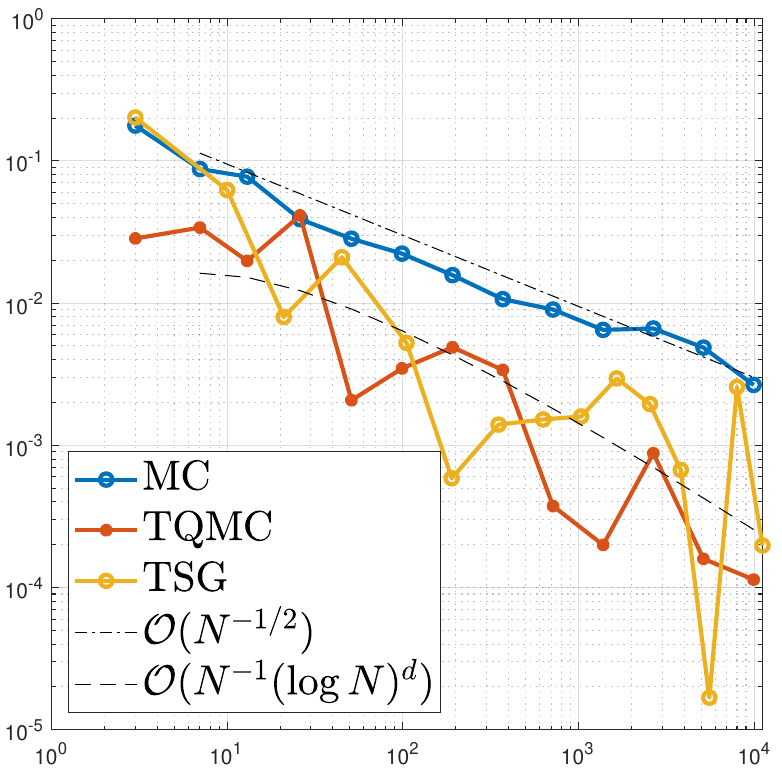}
	\end{subfigure}
	\caption{Convergence plots for the expected value $\bE_{\trv}[f]$ from \Cref{example:Transport_mixture_three_Gaussians} with target distribution $\trv$ given by the Gaussian mixture defined in \eqref{equ:Example_mixture_three_Gaussians} for three different integrands $f \in \{ f_{2}, f_{4}, f_{9} \}$ defined in \Cref{table:list_of_integrands}.
		MC is based on direct sampling from $\trv$ (\Cref{alg:direct_sampling_of_mixtures}), while TQMC and TSG rely on the ODE transport introduced in \Cref{thm:TransportMixtures} from a standard normal distribution $\simpv = \cN(0,\Id_2)$ to $\trv$ (\Cref{alg:ODE_sampling_of_mixtures}).
		To match the initial distribution, the initial points rely on the 2-dimensional Halton sequence transformed by $T^{\cN}$ for TQMC and on a sparse grids construction based on Gauss--Leja points in the case of TSG.		
		Clearly, transported \betterpoints outperform random samples in these examples.
	}
	\label{fig:Three_Gaussians_Convergence_Plots}
\end{figure}

\subsection{Mixture Distributions with Negative Weights}
\label{section:MixturesWithNegativeWeights}

In certain situations, probability densities are approximated by mixture distributions with weights $w_{j}$ that are not necessarily positive.
Such approximations arise e.g.\ in the context of reproducing kernel Hilbert spaces, the conditional density operator of \citep{schuster2020kcdo} being one example.
Note that the proof of \Cref{thm:TransportMixtures} remains legitimate for arbitrary weights as long as we can guarantee $\rho_t$ to be positive for all $t\in[0,1]$, which cannot be expected in general.
However, it does hold in the case of Gaussian densities
\[
	G[m,C](x)
	\defeq
	\frac{|\det C|^{-1/2}}{(2\pi)^{d/2}}
	\exp\left( -\tfrac{1}{2} (x-m)^\top C^{-1} (x-m) \right),
\]
if we do not allow scaling, i.e.\ $A_j = \Id_d$ for all $j=1,\dots,J$:

\begin{proposition}
	\label{prop:IntermediateDensitiesPositiveForGaussian}
	Let $\simpd = G[0,C]$ be a centered Gaussian density on $\bR^d$ with covariance matrix $C$ and let $\trd$ be given by \eqref{equ:MistureDensityScaled} with $A_j = \Id_d$ for all $j=1,\dots,J$.
	If $\trd $ is strictly positive, then the densities
	\[
		\rho_t = \sum_{j=1}^{J}w_j\rho_{j,t},
		\qquad
		\rho_{j,t} \defeq \simpd(x-ta_j),
		\qquad
		t\in[0,1],
	\]
	are strictly positive for all $t\in[0,1]$.
\end{proposition}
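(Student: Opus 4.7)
The plan is to express $\rho_{t}$ as the convolution of a strictly positive function with a strictly positive Gaussian kernel, from which positivity of $\rho_{t}$ follows at once.

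The key observation is that, for any $t \in [0,1]$, the covariance $C$ admits the nonnegative splitting $C = t^{2} C + (1-t^{2}) C$. Together with the standard Gaussian convolution identity $G[m_{1},\Sigma_{1}] \ast G[m_{2},\Sigma_{2}] = G[m_{1}+m_{2},\Sigma_{1}+\Sigma_{2}]$, this gives, for each $j$,
\begin{equation*}
G[ta_{j}, C] = G[ta_{j}, t^{2} C] \ast G[0, (1-t^{2}) C].
\end{equation*}
Since $A_{j} = \Id_{d}$, we have $\rho_{j,t}(x) = \simpd(x - ta_{j}) = G[ta_{j}, C](x)$, and summing against the weights $w_{j}$ yields $\rho_{t} = g_{t} \ast G[0, (1-t^{2}) C]$, where $g_{t} \defeq \sum_{j=1}^{J} w_{j}\, G[ta_{j}, t^{2} C]$.

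Next I would identify $g_{t}$ as a rescaled copy of the target density. A direct computation gives $G[ta_{j}, t^{2} C](x) = t^{-d}\, G[a_{j}, C](x/t)$ for $t > 0$, and hence $g_{t}(x) = t^{-d}\, \trd(x/t)$. Since $\trd$ is strictly positive by hypothesis, so is $g_{t}$ on all of $\bR^{d}$ for every $t \in (0,1]$. For $t \in (0,1)$ the Gaussian kernel $G[0, (1-t^{2}) C]$ is likewise strictly positive, so the convolution $\rho_{t} = g_{t} \ast G[0, (1-t^{2}) C]$ is strictly positive everywhere, as required. The two endpoints are immediate: at $t = 1$ the convolution kernel collapses to $\delta_{0}$, leaving $\rho_{1} = g_{1} = \trd > 0$; at $t = 0$ one directly has $\rho_{0}(x) = \simpd(x) \sum_{j} w_{j} = \simpd(x) > 0$, since the weights sum to $1$.

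I do not foresee a serious obstacle. The delicate point is spotting the right decomposition: the symmetric splitting $C = t^{2} C + (1-t^{2}) C$ is what makes the Gaussian convolution formula apply cleanly, and the possibly-negative weights $w_{j}$ are harmless because they are absorbed into the single factor $g_{t}$, whose strict positivity is inherited from that of $\trd$. The hypothesis $A_{j} = \Id_{d}$ is essential here, since otherwise each mixture component would carry its own covariance and no uniform splitting across $j$ would be available.
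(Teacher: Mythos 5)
Your proof is correct and follows essentially the same route as the paper's: both identify $g_{t}(x) = t^{-d}\trd(x/t) = \sum_{j} w_{j}\, G[ta_{j}, t^{2}C](x)$ as a rescaled (hence strictly positive) copy of $\trd$ and then write $\rho_{t} = g_{t} \ast G[0,(1-t^{2})C]$ via the Gaussian convolution identity. Your treatment of the degenerate endpoints $t=0$ and $t=1$ is slightly more explicit than the paper's, but the argument is the same.
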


\begin{proof}
	If $\trd $ is positive, then so are the densities
	\[
		g_t(x)
		\defeq
		t^{-d}\trd  \left(t^{-1}x\right)
		=
		\sum_{j=1}^{J} w_j G[ta_j,t^2 C](x),
		\qquad
		t\in[0,1].
	\]
	Hence,
	\[
		\rho_t
		=
		\sum_{j=1}^{J} w_j \rho_{j,t}
		=
		\sum_{j=1}^{J} w_j G[t a_j,C]
		=
		g_t\ast G[0,(1-t^2)C]
	\]
	is also positive for each $t\in[0,1]$.
\end{proof}

\begin{corollary}
	\label{corollary:Gaussian_mixtures_with_negative_weights}
	If $\simpd$ is a centered Gaussian density as in \Cref{prop:IntermediateDensitiesPositiveForGaussian}, then the results of \Cref{thm:TransportMixtures} hold without the assumption $w_j\ge 0,\, j=1,\dots,J$, as long as $\trd$ is strictly positive and $A_j = \Id_d$, $j=1,\dots,J$ (no scaling).
\end{corollary}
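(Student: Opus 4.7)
The plan is to show that the positivity assumption $w_j \ge 0$ in \Cref{thm:TransportMixtures} is used only to guarantee that the intermediate density $\rho_t = \sum_{j=1}^J w_j \rho_{j,t}$ stays strictly positive on $[0,1]$, so that the velocity field $v_t = \rho_t^{-1} \sum_{j} w_j \rho_{j,t} v_{j,t}$ is well-defined and smooth enough for the continuity equation argument to go through. Under the hypotheses of the corollary, \Cref{prop:IntermediateDensitiesPositiveForGaussian} supplies exactly this positivity.

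First, I would carefully inspect the proof of \Cref{thm:TransportMixtures} (deferred to \Cref{section:Appendix}) and isolate the places where $w_j \ge 0$ is invoked. The nonnegativity of the weights is used only to conclude two things: (i) that each summand in $\rho_t = \sum_j w_j \rho_{j,t}$ is nonnegative, so that $\rho_t > 0$ everywhere (since $\simpd > 0$); and (ii) that $\rho_t$ is a genuine probability density so that the intermediate distributions $\bP_t$ make sense. Both of these are consequences of $\rho_t > 0$ together with the (automatic) normalisation $\int \rho_t \, \mathrm{d}x = \sum_j w_j = 1$, and the remainder of the argument (checking the continuity equation, verifying the Cauchy--Lipschitz hypotheses on $v_t$, extending the flow globally on $[0,1]$, concluding $(\Phi_t)_{\#}\simpv = \bP_t$) uses only that $\rho_t$ is a strictly positive $C^1$ density together with the algebraic identity $\partial_t \rho_t = -\diver(\rho_t v_t)$ obtained by summing the componentwise identities $\partial_t \rho_{j,t} = -\diver(\rho_{j,t} v_{j,t})$ weighted by the (possibly signed) $w_j$. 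Crucially, the componentwise continuity equation does not require $w_j \ge 0$.

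Second, I would invoke \Cref{prop:IntermediateDensitiesPositiveForGaussian} to replace the role formerly played by the nonnegativity of the weights. Under the corollary's hypotheses ($\simpd = G[0,C]$ centred Gaussian, $A_j = \Id_d$, and $\trd$ strictly positive), that proposition directly yields $\rho_t(x) > 0$ for every $x \in \bR^d$ and every $t \in [0,1]$. The normalisation $\int \rho_t \, \mathrm{d}x = 1$ holds because each $\rho_{j,t}$ is a shifted Gaussian density and the $w_j$ sum to one. Hence $\rho_t$ is a bona fide probability density, and $v_t$ is a well-defined $C^1$ vector field in $x$, jointly continuous in $(t,x)$.

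Finally, I would replay the rest of the argument verbatim: the continuity equation $\partial_t \rho_t = -\diver(\rho_t v_t)$ holds pointwise by linearity in the $w_j$, existence and uniqueness of the flow $\Phi_t$ on $[0,1]$ follow from the local Lipschitz and growth bounds on $v_t$ (here the Gaussian tails of each $\rho_{j,t}$ and boundedness of the affine factors $v_{j,t}/\rho_t$ on the support of $\rho_{j,t}$ are what is needed), and the pushforward identity $(\Phi_t)_{\#} \simpv = \bP_t$ follows as in the original theorem. The main obstacle I anticipate is a bookkeeping one rather than a conceptual one: checking that the quotient $\rho_{j,t}/\rho_t$ remains locally Lipschitz in $x$ (and hence $v_t$ does) when some $w_j$ are negative, which reduces to the fact that $\rho_t$ is bounded away from zero on compact sets by the strict positivity supplied by \Cref{prop:IntermediateDensitiesPositiveForGaussian} combined with continuity in $(t,x)$.
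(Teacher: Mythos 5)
Your proposal is correct and follows essentially the same route as the paper, which treats the corollary as immediate from the observation (stated just before \Cref{prop:IntermediateDensitiesPositiveForGaussian}) that the proof of \Cref{thm:TransportMixtures} uses $w_j \ge 0$ only to secure strict positivity of $\rho_t$, now supplied by that proposition. The only bookkeeping detail worth flagging is that in verifying condition (i) of the cited continuity-equation theorem the bound $\int \norm{v_t}\,\rd\bP_t \le \sum_j \abs{w_j} \bigl( \norm{a_j} + \norm{A_j - \Id_d} M \bigr)$ must be taken with $\abs{w_j}$ in place of $w_j$, which changes nothing of substance.
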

	
\section{Numerical Experiments}
\label{section:NumericalExperiments}

\subsection{Extensive study with an adjustable mixture construction and several integrands}

In \Cref{example:Transport_mixture_three_Gaussians} and \Cref{fig:transport_map_to_Gaussian_mixture_grid_and_pulled_back_function,fig:TransportingMCqMCsparseGridsToMixture,fig:Three_Gaussians_Convergence_Plots} we have analysed a specific mixture of three Gaussian distributions.
In order to perform a more extensive and fair investigation of our \ac{TQMC} method for various dimensions $d$, numbers $J$ of mixture components, and integrands $f$, we deploy the following construction using a Gaussian mixture given by \eqref{equ:MistureDensityScaled} with $\simpv \sim \cN(0,\Id_d)$:
\begin{enumerate}[label=(\roman*)]
	\item \label{item:choice_mixture_centers}
	The centers $a_{j} \iidsim \cN(0,\Id_{d})$ are independent standard normal random variables.
	\item \label{item:choice_mixture_scaling_matrices}
	The scaling matrices $A_{j}$ are the Cholesky factors of covariance matrices $C_{j} = d \, \tilde{C}_{j}$, where $\tilde{C}_{j} \iidsim W_{d}(\Sigma,\nu)$ and $W_{d}(\Sigma,\nu)$ denotes the Wishart distribution with $\nu = d+4$ degrees of freedom with covariance matrix $\Sigma = \nu^{-1} \Id_{d}$.
	\item \label{item:choice_mixture_weights}
	We use equal weights $w_{j} = J^{-1}$.
	\item \label{item:description_integrand_list}
	We consider nine different integrands $f$ which were kindly provided by John Burkardt on his website: \url{https://people.math.sc.edu/Burkardt/f\_src/test\_nint/test\_nint.html}.
	To cover a broad range of integrands we chose the functions with the numbers 16, 17, 2 ,6, 7, 28, 30, 9, 27, and 18, listed in \Cref{table:list_of_integrands} (after relabelling).
\end{enumerate}
We compared our \ac{TQMC} method (\Cref{alg:ODE_sampling_of_mixtures}) with direct MC sampling (\Cref{alg:direct_sampling_of_mixtures}), and the results are summarised in \Cref{table:list_of_integrands}.

\Cref{fig:MC_QMC_comparison_for_various_dimensions_and_component_numbers} illustrates these results as convergence plots over the number $N$ of quadrature points for dimension $d \in \{ 2, 5, 20, 50 \}$ and $J \in \{ 2, 5, 20 \}$ mixture components for one specific integrand, namely $f(y) = f_{9}(y) = \cos\big( 0.3 + d^{-1}\sum_{j=1}^{d} y_{j} \big)$.

In addition, we have investigated how a CQMC strategy addressed in \Cref{remark:QMC_points_from_each_component_separately} and \Cref{alg:componentwise_transport_sampling_of_mixtures} performs as the number $J \in [1,2^{11}]$ of mixture components increases (for a fixed number $N=2^{12}$ of evaluation points), and illustrate the results in \Cref{fig:MC_QMC_error_over_J_dim_2}.
As expected, if $J$ is large, e.g.\ being of the order of the affordable number $N$ of evaluation points, this is no longer a feasible strategy, even for equal weights $w_{j} = J^{-1}$, and is even outperformed by MC for large values of $J$.

\begin{table}[p]
	\centering	
	\caption{%
		List of integrands for which the performance of MC and TQMC was compared, cf.\ \ref{item:description_integrand_list} above.
		Here, $y^{\ast} \in \bR^{d}$ is given by $y_{j}^{\ast} = 1/2$, $j=1,\dots,d$, $B_{r} (x) = \set{ z\in\bR^{d} }{ \| z-y^{\ast} \|_{2} \leq r }$
		denotes the closed ball of radius $r$ centered at $y^{\ast}$ and $\mathds{1}_{A}$ is the indicator function of a subset $A \subseteq \bR^{d}$.
	}
	\label{table:list_of_integrands}
	\begin{tabular}{p{12.5em}<{\centering} p{12.5em}<{\centering} p{12.5em}<{\centering}}		
		\specialrule{1pt}{1\jot}{-3\jot}
		\small
		\begin{justify}
			TQMC outperforms MC in low dimensions; no comparison possible in higher dimensions due to lack of convergence of both methods.
		\end{justify}
		&
		\small
		\begin{justify}
			TQMC outperforms MC in low dimensions; in higher dimensions the regime of convergence is not reached, but TQMC still performs better or comparable to MC.
		\end{justify}
		&
		\begin{justify}
			TQMC outperforms MC throughout the considered dimensions.
		\end{justify}
		\\		
		\specialrule{1.2pt}{-2\jot}{2\jot}
		&
		&
		$f_{1}(y) 
		=
		\norm{y - y^{\ast}}_{1}$
		\\
		\grayrule
		&
		&
		$f_{2}(y) 
		=
		\norm{y - y^{\ast}}_{2}^{2}$
		\\
		\grayrule
		&
		$f_{3}(y) 
		=
		\big(\sum_{j=1}^{d} (2 y_{j} - 1)\big)^{4}$
		&
		\\
		\grayrule
		$f_{4}(y) 
		=
		\prod_{j=1}^{d} 2\, \abs{2y_{j}-1}$
		&		
		&
		$\tilde{f}_{4} = f_{4}^{1/d}$
		\\
		\grayrule		
		$f_{5}(y) 
		=
		\prod_{j=1}^{d} \tfrac{\pi}{2}\, \sin (\pi y_{j})$
		&
		$\tilde{f}_{5} = \abs{f_{5}}^{1/d} \sgn(f_{5})$
		&
		\\
		\grayrule
		\small
		$f_{6}(y) 
		=
		\prod_{j=1}^{d} \big( 1 + \abs{y_{j} - y_{j}^{\ast}}^{2}\big)^{-1}$
		&
		&
		$\tilde{f}_{6} 
		=
		f_{6}^{1/d}$
		\\
		\grayrule
		&
		&
		\small
		$f_{7}(y) 
		=
		\exp\Big( - d^{-2} \norm{y - y^{\ast}}_{2}^{2} \Big)$
		\\
		\grayrule
		&	
		&
		$f_{8}(y) 
		=
		\exp\big( d^{-1} \sum_{j=1}^{d} y_{j} \big)$
		\\
		\grayrule
		&
		\small
		$f_{9}(y) 
		=
		\cos\big( 0.3 + d^{-1}\sum_{j=1}^{d} y_{j} \big)$
		&		
		\\
		\grayrule
		$f_{10} 
		=
		\mathds{1}_{B_{1/2}(y^{\ast})}$
		&
		&
		$\tilde{f}_{10} 
		=
		\mathds{1}_{B_{d}(y^{\ast})}$
		\\
		\specialrule{1pt}{2\jot}{0\jot}
	\end{tabular}
\end{table}

\begin{figure}[p]
	\centering
	\begin{subfigure}[b]{0.19\textwidth}
		\centering
		\includegraphics[width=\textwidth]{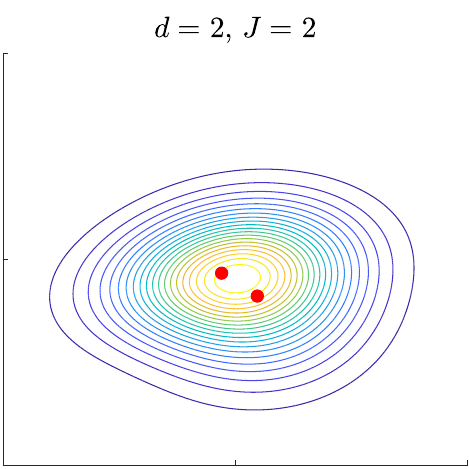}
	\end{subfigure}
	\hfill        	
	\begin{subfigure}[b]{0.209\textwidth}
		\centering
		\includegraphics[width=\textwidth]{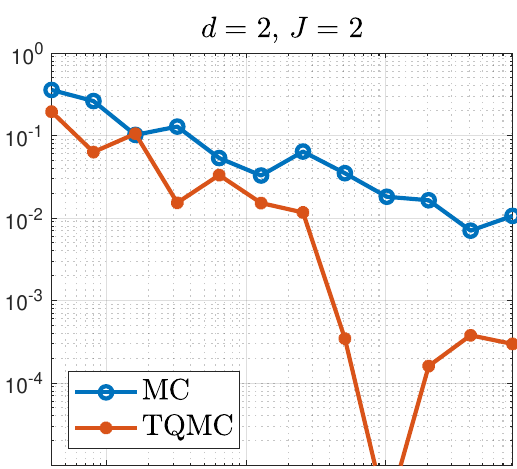}
	\end{subfigure}
	\hfill
	\begin{subfigure}[b]{0.19\textwidth}
		\centering
		\includegraphics[width=\textwidth]{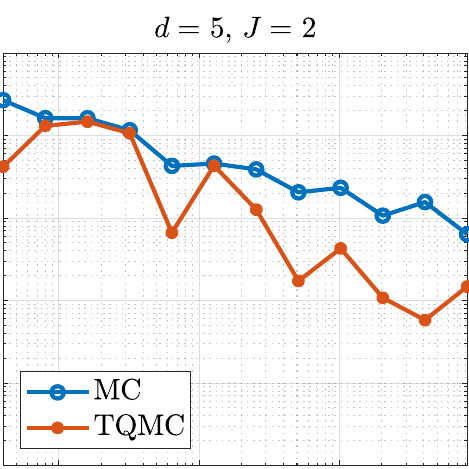}
	\end{subfigure}
	\hfill
	\begin{subfigure}[b]{0.19\textwidth}
		\centering
		\includegraphics[width=\textwidth]{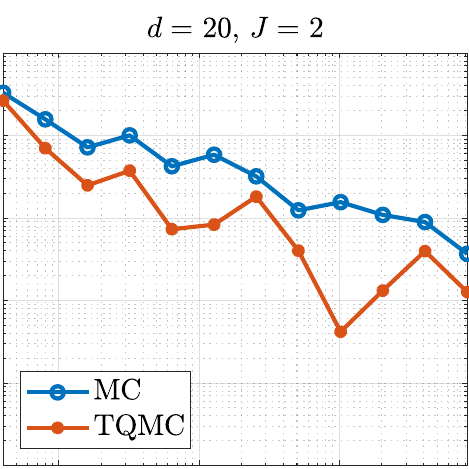}
	\end{subfigure}
	\hfill
	\begin{subfigure}[b]{0.19\textwidth}
		\centering
		\includegraphics[width=\textwidth]{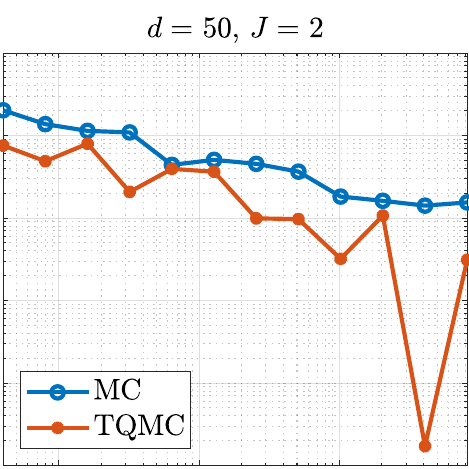}
	\end{subfigure}
	\vfill
	\begin{subfigure}[b]{0.19\textwidth}
		\centering
		\includegraphics[width=\textwidth]{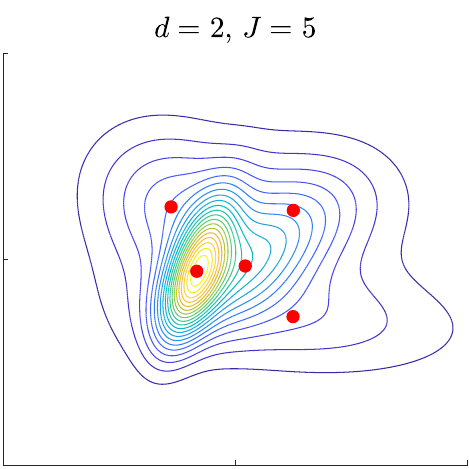}
	\end{subfigure}
	\hfill  
	\begin{subfigure}[b]{0.209\textwidth}
		\centering
		\includegraphics[width=\textwidth]{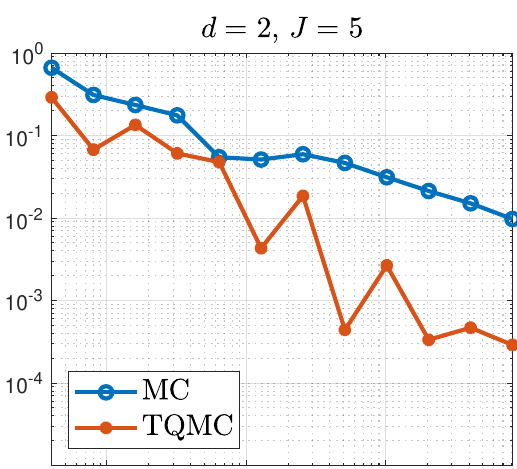}
	\end{subfigure}
	\hfill
	\begin{subfigure}[b]{0.19\textwidth}
		\centering
		\includegraphics[width=\textwidth]{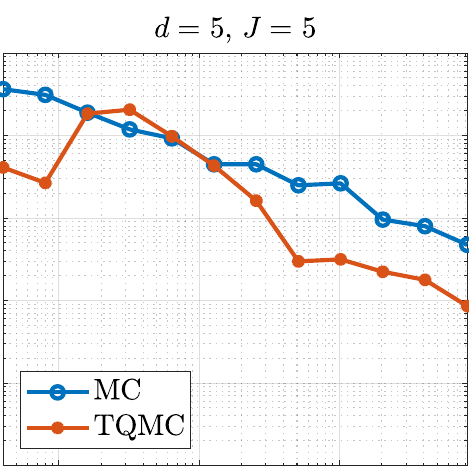}
	\end{subfigure}
	\hfill
	\begin{subfigure}[b]{0.19\textwidth}
		\centering
		\includegraphics[width=\textwidth]{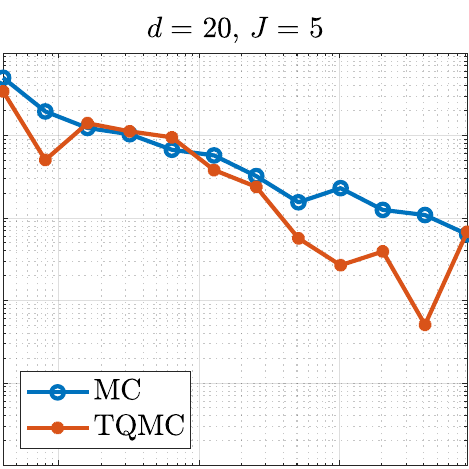}
	\end{subfigure}
	\hfill
	\begin{subfigure}[b]{0.19\textwidth}
		\centering
		\includegraphics[width=\textwidth]{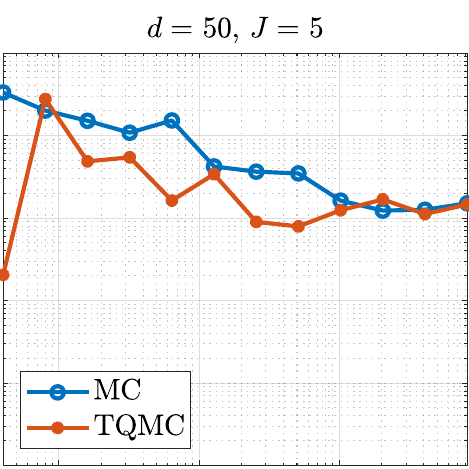}
	\end{subfigure}
	\vfill
	\begin{subfigure}[b]{0.19\textwidth}
		\centering
		\includegraphics[width=\textwidth]{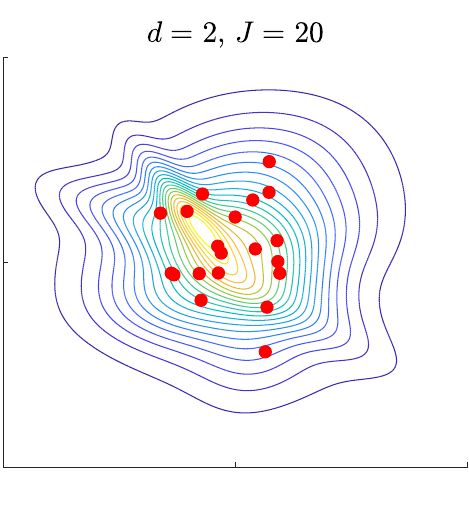}
	\end{subfigure}
	\hfill  
	\begin{subfigure}[b]{0.209\textwidth}
		\centering
		\includegraphics[width=\textwidth]{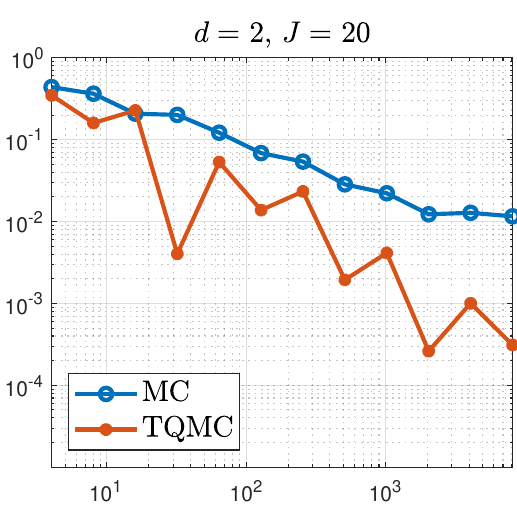}
	\end{subfigure}
	\hfill
	\begin{subfigure}[b]{0.19\textwidth}
		\centering
		\includegraphics[width=\textwidth]{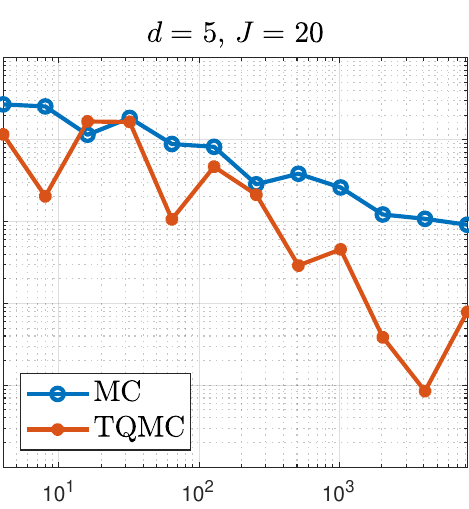}
	\end{subfigure}
	\hfill
	\begin{subfigure}[b]{0.19\textwidth}
		\centering
		\includegraphics[width=\textwidth]{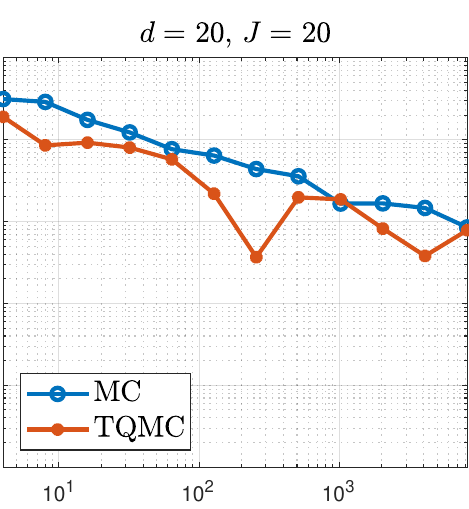}
	\end{subfigure}
	\hfill
	\begin{subfigure}[b]{0.19\textwidth}
		\centering
		\includegraphics[width=\textwidth]{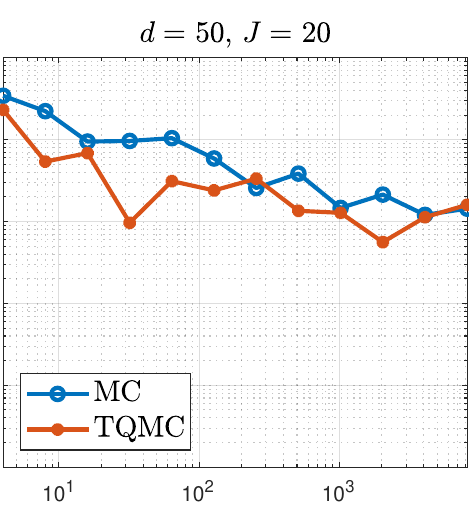}
	\end{subfigure}	
	\caption{Comparison of MC and TQMC in terms of the integration error plotted over the number $N\in [2^{2},2^{13}]$ of sample points for the function $f(y) = f_{9}(y) = \cos\big( 0.3 + d^{-1}\sum_{j=1}^{d} y_{j} \big)$ in different dimensions $d = 2,5,20,50$ and for various numbers $J=2,5,20$ of mixture components;
	see \ref{item:choice_mixture_centers}---\ref{item:choice_mixture_weights} on page \pageref{item:choice_mixture_centers}.
		For dimension $d=2$ a plot of the corresponding mixture density is added in the left panel.
	}
	\label{fig:MC_QMC_comparison_for_various_dimensions_and_component_numbers}
\end{figure}

\begin{figure}[p]
	\centering         	
	\includegraphics[width=0.5\textwidth]{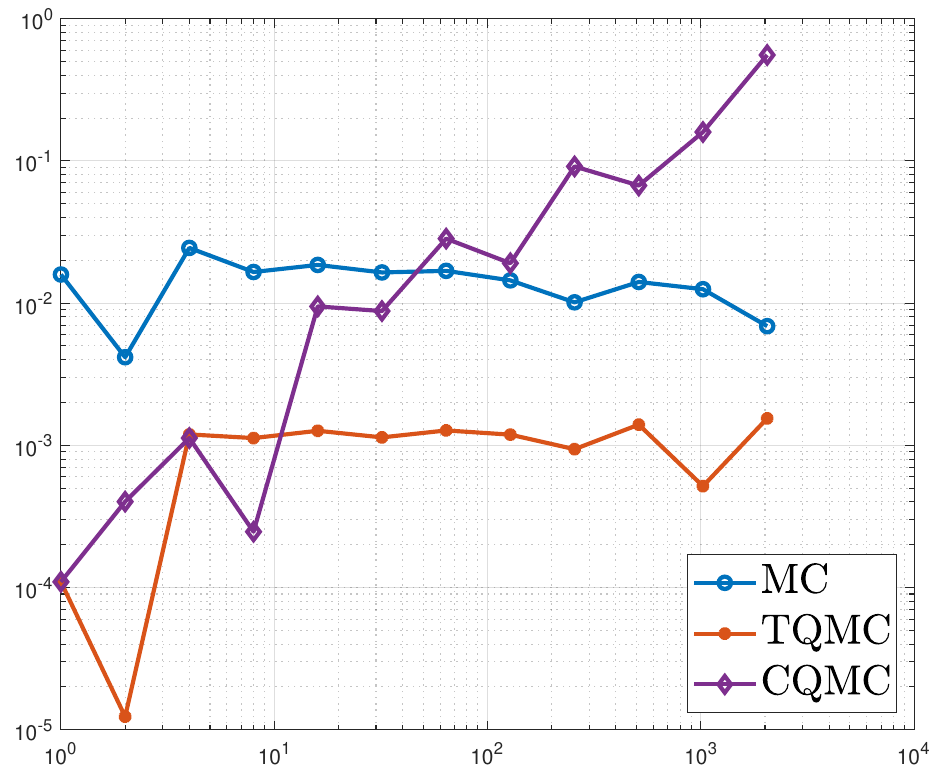}
	\caption{
		CQMC described in \Cref{remark:QMC_points_from_each_component_separately} (\Cref{alg:componentwise_transport_sampling_of_mixtures}) for a varying number $J \in [1,2^{11}]$ of mixture components with a fixed number of evaluation points $N = 2^{12}$, compared to MC sampling (\Cref{alg:direct_sampling_of_mixtures}) and our \ac{TQMC} approach (\Cref{alg:ODE_sampling_of_mixtures}).
		Clearly, the componentwise procedure is not meaningful for large values of $J$, while the other two are barely affected by varying $J$.
	}
	\label{fig:MC_QMC_error_over_J_dim_2}
\end{figure}

\subsection{Transported \ac{QMC} Points within \ac{LAIS}}
\label{section:Lais_with_QMC}

As mentioned in the introduction, the transport to mixture distributions should not be viewed as a toy example since it can be combined with various state-of-the-art importance sampling algorithms.
In this section, we demonstrate its practical importance by combining TQMC with \acl{LAIS} (\acs{LAIS}; \citealp{martino2017layered}).
\ac{LAIS} and its various versions \citep{martino2017layered,bugallo2017adaptive,martino2017anti} are powerful and up-to-date \ac{MC} methods that build an approximation to the target distribution $\trv$ by a mixture distribution in a first step (`upper layer'), typically by running one or more Markov chains, and using it as an importance sampling distribution (\citealp[Section~5.7]{rubinstein2016simulation}; \citealp[Section~3.3]{robert2004monte}) in a second step (`lower layer').

Leaving the upper layer untouched, we can modify the sampling of the lower layer by replacing the random samples with transported \betterpoints.
To ensure a comparison fair to \ac{LAIS}, we illustrate the benefits of this modification on a multimodal two-dimensional target distribution, visualised in \Cref{fig:LAIS_Density_with_points}, that corresponds exactly to the one used by \citet{martino2017layered}, as do the Markov chains used in the upper layer and the importance sampling distribution built from them.
Further, we run \ac{LAIS} using the `full deterministic mixture approach' \citep{martino2017layered} described below and from now on abbreviated by DM-\ac{LAIS}, which gives the best results in terms of the variance of the estimator at the price of a higher computational cost, thus getting the best out of \ac{LAIS}.

\begin{figure}[t]
	\centering         	
	\begin{subfigure}[b]{0.48\textwidth}
		\centering
		\includegraphics[width=\textwidth]{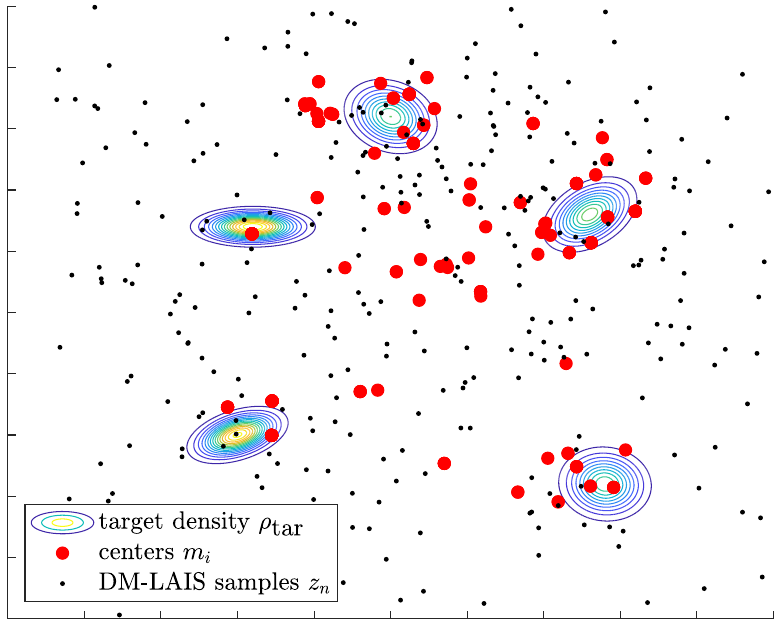}
	\end{subfigure}
	\hfill
	\begin{subfigure}[b]{0.48\textwidth}
		\centering
		\includegraphics[width=\textwidth]{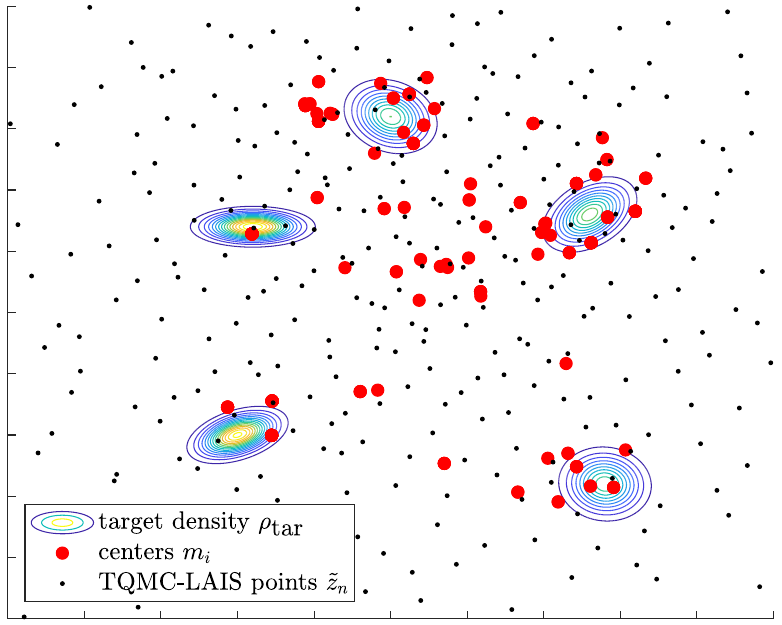}
	\end{subfigure}
	\caption{Target density used by \citet{martino2017layered}, together with the centers $m_{i}$ and DM-LAIS samples $z_{n}$ (left) compared to \ac{TQMC}-\ac{LAIS} points $\tilde{z}_{n}$ (right).
		As expected, the \ac{TQMC}-\ac{LAIS} points are more evenly spaced, resulting in a higher convergence rate of the corresponding Monte Carlo estimator, as shown in \Cref{fig:LAIS_convergence}.
	}
	\label{fig:LAIS_Density_with_points}
\end{figure}

DM-\ac{LAIS} is based on the samples $m_{1},\dots,m_{CT}$ produced by $C\in\bN$ parallel Metropolis--Hastings chains that are run for $T\in\bN$ time steps (upper layer).
These samples are then used to build the importance sampling density $\hat\rho = (CT)^{-1} \sum_{i=1}^{CT} q(\quark | m_{i})$, where $q(\quark | m_{i})$ are easily sampled probability density functions (oftentimes these are chosen to coincide with the proposal densities \citep{schuster2021MCIS}; the reader may think of Gaussian kernels centered at $m_{i}$).
Hereafter, $M \in \bN$ \ac{IID} samples are produced from each of the components $q(\quark | m_{i})$, resulting in a so-called \emph{stratified} sampling $(z_{1},\dots,z_{CTM})$ from $\hat\rho$ \citep[Chapter~5.5]{rubinstein2016simulation}, which, compared to independent samples from $\hat\rho$ produced by \Cref{alg:direct_sampling_of_mixtures}, result in a smaller variance of the \ac{MC} estimator \citep[Proposition~5.5.1]{rubinstein2016simulation}.
These samples are then used in a standard (self-normalised) importance sampling scheme to approximate the expected value $\bE_{\trv}[f]$ of some function $f\in L^{1}(\trv)$ by
\[
	\hat{S}^{\textup{snIS}}[f]
	=
	\frac{\sum_{n=1}^{CTM} \omega_{n} f(z_{n})}{\sum_{n=1}^{CTM} \omega_{n}},
	\qquad
	\omega_{n}
	=
	\frac{\trd(z_{n})}{\hat\rho(z_{n})}.
\]

We suggest to replace the samples $z_{n}$ by \ac{TQMC} points $\tilde{z}_{n}$ transported to the mixture distribution $\hat\rho$ as in \Cref{thm:TransportMixtures} (\Cref{alg:ODE_sampling_of_mixtures}) and term the resulting algorithm \ac{TQMC}-\ac{LAIS}.
Naturally, for this to work we require $\hat\rho$ to be of the form \Cref{equ:MistureDensityScaled}, i.e.\ the densities $q(\quark | m_{i})$ have to be scaled and shifted copies of $\simpd$.
While any other \betterpoints can be used, we employed \ac{QMC} points as a proof of concept.
For practical applications with extremely long Markov chains, it is meaningful to \emph{thin} the chains \citep{owen2017thinning,riabiz2022thinning} in order to keep the number of mixtures (and thereby the computational cost) manageable.

We make two comparisons between DM-\ac{LAIS} and \ac{TQMC}-\ac{LAIS}.
First, we fix the number of chains $C = 10$ and number of time steps $T = 20$ and vary the number of samples per mixture component $M$.
Second, we fix the number of chains $C = 10$ and number of samples per mixture component $M=100$ and vary the number of time steps $T$.
In both cases we plot the approximation error $\| \hat{S}^{\textup{snIS}}[f] - \bE_{\trv}[f]\|$ for $f(x) = x$ over the total number of samples $N = CTM$, illustrated in \Cref{fig:LAIS_convergence}.
We observe a higher convergence rate of \ac{TQMC}-\ac{LAIS} compared to DM-\ac{LAIS}.

\begin{figure}[t]
	\centering         	
	\begin{subfigure}[b]{0.48\textwidth}
		\centering
		\includegraphics[width=\textwidth]{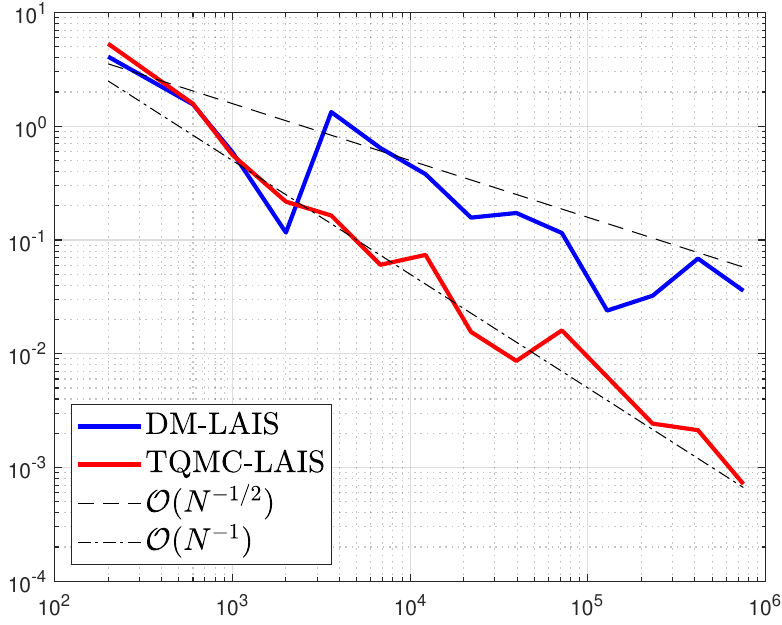}
	\end{subfigure}
	\hfill
	\begin{subfigure}[b]{0.48\textwidth}
		\centering
		\includegraphics[width=\textwidth]{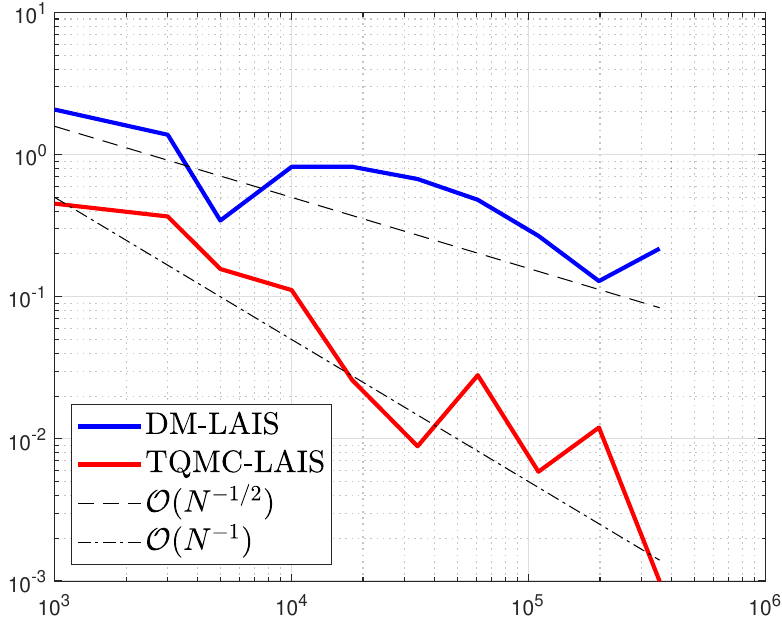}
	\end{subfigure}
	\caption{Monte Carlo quadrature error over the number of samples for \ac{TQMC}-\ac{LAIS} compared to DM-\ac{LAIS}.
		As expected, the \ac{TQMC} points show a higher convergence rate than the random DM-\ac{LAIS} samples.
		\textit{Left:} number of chains $C = 10$ and number of time steps $T = 20$ fixed, number of samples per mixture component $M$ varied.
		\textit{Left:} number of chains $C = 10$ and number of samples per mixture component $M=100$ fixed, number of time steps $T$ varied.
	}
	\label{fig:LAIS_convergence}
\end{figure}
	
\section{Implementation Details}
\label{section:ImplementationDetails}

In all our numerical experiments, QMC points relied on the 2-dimensional Halton sequence (transformed by $T^{\cN}$ to match the initial Gaussian distribution) using the following \textsc{Matlab} commands:
\begin{greennote}
\begin{lstlisting}
  p = haltonset(dim,'Skip',1e3,'Leap',1e2);
  p = scramble(p,'RR2');
\end{lstlisting}
\end{greennote}
\noindent
For sparse grids we used the \textsc{sparse grids Matlab kit} developed by Chiara Piazzola, Lorenzo Tamellini and their coworkers \citep{piazzola2023SGK}, available at
\begin{center}
	\href{https://sites.google.com/view/sparse-grids-kit}{https://sites.google.com/view/sparse-grids-kit}.
\end{center}
For the specific sparse grids constructions based on Gauss--Hermite and Gauss--Leja points the following \textsc{Matlab} commands were used, respectively:
\begin{greennote}
\begin{lstlisting}
  knots = @(n) knots_gaussian(n,0,1);
  knots = @(n) knots_normal_leja(n,0,1,'sym_line');
\end{lstlisting}
\end{greennote}
\noindent
followed by the standard commands to obtain the reduced sparse grids and weights:
\begin{greennote}
\begin{lstlisting}
  S       = smolyak_grid(2,level,knots,@lev2knots_lin);
  Sr      = reduce_sparse_grid(S);
  samp_0  = Sr.knots;
  weights = Sr.weights;    
\end{lstlisting}
\end{greennote}
\noindent
For the experiments with \ac{LAIS} in \Cref{section:Lais_with_QMC} we used the \textsc{Matlab} code kindly provided by Luca Martino on his webpage \href{http://www.lucamartino.altervista.org/code.html}{www.lucamartino.altervista.org/code.html}.
	\section{Conclusion and Outlook}
\label{section:Conclusion}

In this paper, we have derived an explicit transport map $T\colon \bR^{d} \to \bR^{d}$ from some probability distribution $\simpv$ on $\bR^{d}$ to a mixture $\trv$ of scaled and shifted versions of $\simpv$ (\Cref{thm:TransportMixtures} and \Cref{alg:ODE_sampling_of_mixtures}).
By `explicit' we mean that it is the solution of an ODE with \emph{analytic} right-hand side.
This transport map can be used to transport \betterpoints and thereby quadrature rules previously established for $\simpv$ to such a mixture, allowing for super-root-$N$ convergence of the corresponding empirical mean towards the analytic mean $\bE_{\trv}[f] \defeq \int f(y)\, \rd \trv(y)$ for certain integrands $f$.

While most distributions in practice, such as Bayesian posteriors, are not mixture distributions, many methodologies approximate the distribution of interest by such a mixture, e.g.\ \ac{LAIS} as well as certain versions of variational inference, such as variational boosting.
Hence, combining the above approach with such methods by using an importance sampling estimator allows to approximate expected values with respect to more complicated target distributions with improved convergence rates.

Since the right-hand side of the constructed ODE is explicit, $T$ can be evaluated for each point separately, without any particle interactions, that are typical for most particle flow methods.
Hence, our approach is trivially parallelisable and can be easily extended to adaptive schemes, such as adaptive sparse grids.

We demonstrated the advantages of our approach for a simple mixture of three Gaussian distributions in dimension $d = 2$ and performed an extensive study over different dimensions $d = 2,5,20,50$ and for various numbers of mixture components.
In addition, we showed how LAIS can benefit from using transported \betterpoints in place of random samples in the sampling step ('lower layer').

In order to establish the theory for transported \betterpoints, one crucial step remains as an open problem, namely characterisation of the class $\cC_T$ of admissible integrands $f$.
As argued in \Cref{remark:transformed_class_of_functions}, the corresponding detailed analysis goes beyond the scope of this paper.
	
	\appendix
	\section{Supporting Technical Results}
\label{section:Appendix}

\begin{lemma}
	\label{lemma:LinearChangeOfVariablesForDensities}
	Let $X\sim \bP_X$, where $\bP_X$ is a probability distribution with density $\rho_X\colon\bR^d\to\bR_{\ge 0}$, and $a\in\bR_{\ge 0}^d$, $A\in\GL(d,\bR)$, with $\GL(d,\bR)$ denoting the $d$-dimensional general linear group.
	Then $Y = AX+a$ has the probability density
	\[
		\rho_Y(y) = \abs{\det A}^{-1}\rho_X(A^{-1}(y-a)).
	\]
\end{lemma}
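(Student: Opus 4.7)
The plan is to invoke the pushforward definition together with the classical change-of-variables formula for Lebesgue integrals under a smooth invertible transformation. Concretely, let $T \colon \bR^d \to \bR^d$ be the affine map $T(x) \defeq Ax + a$. Since $A \in \GL(d,\bR)$, the map $T$ is a global $C^\infty$-diffeomorphism with inverse $T^{-1}(y) = A^{-1}(y-a)$ and constant Jacobian determinant $\det A \neq 0$. The law of $Y = T(X)$ is the pushforward $T_\# \bP_X$ in the sense of \eqref{eq:DeterministicPushforward}.

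The key steps, in order, are as follows. First, for an arbitrary Borel set $B \subseteq \bR^d$, unfold the pushforward:
\[
    \bP_Y(B)
    =
    (T_\# \bP_X)(B)
    =
    \bP_X\bigl( T^{-1}(B) \bigr)
    =
    \int_{T^{-1}(B)} \rho_X(x) \, \rd x.
\]
Second, apply the standard change of variables $y = T(x) = Ax + a$, equivalently $x = A^{-1}(y-a)$, which transforms the Lebesgue measure by $\rd x = \abs{\det A}^{-1}\, \rd y$. This gives
\[
    \bP_Y(B)
    =
    \int_{B} \rho_X\bigl(A^{-1}(y-a)\bigr)\, \abs{\det A}^{-1} \, \rd y.
\]
Third, since this identity holds for every Borel set $B$, by the uniqueness of Radon--Nikodym densities with respect to Lebesgue measure, the integrand must coincide almost everywhere with the density $\rho_Y$ of $Y$, yielding the claimed formula.

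There is no real obstacle here: the argument is an entirely routine application of the change-of-variables theorem for Lebesgue integrals under a linear (hence $C^1$) diffeomorphism, and the assumption $A \in \GL(d,\bR)$ guarantees both invertibility of $T$ and non-vanishing of the Jacobian. The only point worth noting is that the hypothesis $a \in \bR_{\ge 0}^d$ plays no role in the proof and could be dropped; any $a \in \bR^d$ would suffice. I would keep the proof to three or four lines in the final write-up, essentially displaying the two equalities above and invoking uniqueness of the density.
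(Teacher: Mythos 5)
Your proof is correct and is exactly the argument the paper has in mind: the paper's proof consists of the single sentence that this ``is simply the linear change of variables formula for probability densities,'' which is precisely what you spell out. Your side remark that the hypothesis $a \in \bR_{\ge 0}^d$ is superfluous is also accurate.
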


\begin{proof}
	This is simply the linear change of variables formula for probability densities.
\end{proof}

\begin{theorem}[{\citet[Proposition~8.1.8]{ambrosio2008gradient}}]
	\label{thm:continuity_equation_Ambrosio_version}
	Let $(\bP_{t})_{t \in [0,T]}$, $T > 0$, be a narrowly continuous family of Borel probability measures on $\bR^{d}$, $d\in\bN$, solving the continuity equation $\partial_t \bP_{t} = -\diver(v_{t} \bP_{t})$ on $(0,T) \times \bR^{d}$ with respect to a Borel vector field $v_{t}$ satisfying
	\begin{enumerate}[label = (\roman*)]
		\item
		\label{item:continuity_equation_technical_assumption_1}
		$\int_{0}^{T} \int_{\bR^{d}} \norm{v_{t}(x)} \, \mathrm d\bP_{t} \, \mathrm dt < \infty$
		\item 
		\label{item:continuity_equation_technical_assumption_2}
		$\int_{0}^{T} \sup_{B} \norm{v_{t}} + \mathrm{Lip}(v_{t},B) \, \mathrm dt < \infty$
		for every compact $B \subseteq \bR^{d}$,
	\end{enumerate}
	where $\mathrm{Lip}(f,A)$ denotes a Lipschitz constant of the function or vector field $f$ in the set $A$.
	Then, for $\bP_{0}$-almost every $x\in\bR^{d}$, the characteristic system
	\[
		\tfrac{\partial}{\partial t} \Phi_{t}(x)
		=
		v_{t} (\Phi_{t}(x)),
		\qquad
		\Phi_{0}(x)
		=
		x,
	\]
	admits a globally-defined solution $(\Phi_{t})_{t\in [0,T]}$ and, for each $t\in [0,T]$, $\bP_{t} = (\Phi_{t})_{\#} \bP_{0}$.
\end{theorem}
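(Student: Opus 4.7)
The plan is to set up the hypotheses of \Cref{thm:continuity_equation_Ambrosio_version} with the curve $(\bP_{t})_{t\in [0,1]}$ having densities $\rho_{t} = \sum_{j} w_{j} \rho_{j,t}$ and with the vector field $v_{t}$ as given. The conclusion of the theorem then yields the existence of the flow $\Phi_{t}$ together with the pushforward identity $(\Phi_{t})_{\#}\simpv = \bP_{t}$, which gives in particular $T = \Phi_{1}$ as a transport map from $\simpv$ to $\trv$. The work therefore consists of (a) verifying the continuity equation $\partial_{t}\rho_{t} = -\diver(\rho_{t} v_{t})$, (b) checking the integrability and local Lipschitz hypotheses \ref{item:continuity_equation_technical_assumption_1}--\ref{item:continuity_equation_technical_assumption_2} of \Cref{thm:continuity_equation_Ambrosio_version}, and (c) checking narrow continuity of $t\mapsto\bP_{t}$.

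\textbf{Step 1: componentwise continuity equation.} First I would observe that $\rho_{j,t}$ is the pushforward density (\Cref{lemma:LinearChangeOfVariablesForDensities}) of $\simpd$ under the affine map $S_{j,t}(y) \defeq A_{j,t}y + ta_{j}$, and that the time-dependent velocity induced by this smooth curve of affine maps is exactly $v_{j,t}(x) = \dot S_{j,t}(S_{j,t}^{-1}(x)) = a_{j} + (A_{j}-\Id_{d})A_{j,t}^{-1}(x-ta_{j})$. Hence a direct computation using $\partial_{t} A_{j,t} = A_{j}-\Id_{d}$ and the $C^{1}$ regularity of $\simpd$ shows $\partial_{t}\rho_{j,t} = -\diver(\rho_{j,t} v_{j,t})$ pointwise. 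Summing against the weights $w_{j}\ge 0$ and using linearity of divergence gives $\partial_{t}\rho_{t} = -\diver(\rho_{t}v_{t})$ with the claimed $v_{t}$; strict positivity of $\simpd$ forces $\rho_{t}>0$ everywhere (since $A_{j,t}\in\mathbf{S}_{++}^{d}$ for all $t\in[0,1]$ by convexity), so $v_{t}$ is well defined.

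\textbf{Step 2: integrability.} For \ref{item:continuity_equation_technical_assumption_1}, I bound $\rho_{t}\|v_{t}\| \le \sum_{j} w_{j}\rho_{j,t}\|v_{j,t}\|$, then use the change of variables $x = A_{j,t}y + ta_{j}$ to get
\begin{equation*}
    \int_{\bR^{d}} \rho_{j,t}(x)\|v_{j,t}(x)\|\,\rd x = \int_{\bR^{d}} \|a_{j} + (A_{j}-\Id_{d})y\|\,\simpd(y)\,\rd y \le \|a_{j}\| + \|A_{j}-\Id_{d}\|_{\mathrm{op}}\,M,
\end{equation*}
which is bounded uniformly in $t$. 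Integrating over $t\in[0,1]$ yields \ref{item:continuity_equation_technical_assumption_1}. For \ref{item:continuity_equation_technical_assumption_2}, I fix a compact $B\subseteq\bR^{d}$ and note that on $[0,1]\times B$ the denominator $\rho_{t}(x) = \sum_{j} w_{j}\rho_{j,t}(x)$ is bounded below by a positive constant (continuous positive function on a compact set), while each $\rho_{j,t}$ and $v_{j,t}$ are $C^{1}$ in $x$ with $C^{0}$ $t$-dependence; an elementary quotient/product rule bound then gives a $t$-uniform sup-and-Lipschitz estimate on $v_{t}|_{B}$, which suffices after integration in $t$.

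\textbf{Step 3: narrow continuity and conclusion.} Narrow continuity of $(\bP_{t})_{t\in[0,1]}$ follows from continuity of $t\mapsto \rho_{t}(x)$ for every $x$, together with the domination $\rho_{t}(x) \le \sup_{s\in[0,1]}|\det A_{j,s}|^{-1}\|\simpd\|_{\infty}$ on compacts and tightness (which follows from the uniform first-moment bound implicit in Step 2 and Markov's inequality). With \ref{item:continuity_equation_technical_assumption_1}--\ref{item:continuity_equation_technical_assumption_2} in hand, \Cref{thm:continuity_equation_Ambrosio_version} with $T=1$ delivers the globally defined flow $\Phi_{t}$ and the identity $(\Phi_{t})_{\#}\simpv = \bP_{t}$ for every $t\in [0,1]$, completing the proof.

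\textbf{Main obstacle.} The one step that requires genuine care, rather than routine bookkeeping, is \ref{item:continuity_equation_technical_assumption_2}: to produce a uniform-in-$t$ lower bound on $\rho_{t}$ over an arbitrary compact $B$, and a corresponding Lipschitz estimate on $v_{t}=(\sum_{j}w_{j}\rho_{j,t}v_{j,t})/\rho_{t}$. The quotient structure means that a naive bound could blow up where $\rho_{t}$ is small, so one must use that $\rho_{t}$ is jointly continuous and strictly positive on the compact set $[0,1]\times B$ (this is where strict positivity of $\simpd$ is essential) and that each $v_{j,t}$ is affine in $x$ with coefficients bounded uniformly in $t\in[0,1]$.
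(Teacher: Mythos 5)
Your proposal does not prove the stated theorem. \Cref{thm:continuity_equation_Ambrosio_version} is the general representation result for solutions of the continuity equation, quoted verbatim from \citet[Proposition~8.1.8]{ambrosio2008gradient}; the paper gives no proof of it and uses it as a black box. Your argument likewise takes it as given (``With (i)--(ii) in hand, \Cref{thm:continuity_equation_Ambrosio_version} \dots delivers the globally defined flow'') and instead verifies its hypotheses for the specific mixture densities $\rho_t = \sum_j w_j \rho_{j,t}$. That is precisely the content of the paper's proof of \Cref{thm:TransportMixtures}, not of \Cref{thm:continuity_equation_Ambrosio_version}; read as a proof of the stated theorem, the argument is circular, since it invokes the very result it is supposed to establish.

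A genuine proof of the stated theorem would have to supply two ingredients that are absent from your write-up. First, global existence of the characteristics for $\bP_0$-almost every starting point: condition (ii) gives only local existence and uniqueness via Cauchy--Lipschitz, and one must use the integrability condition (i), together with the fact that $(\bP_t)_t$ solves the continuity equation, to show that the set of initial points whose maximal solution leaves every compact set before time $T$ is $\bP_0$-negligible. Second, the identification $\bP_t = (\Phi_t)_{\#}\bP_0$ is not automatic once the flow exists: the standard route is to check that $t \mapsto (\Phi_t)_{\#}\bP_0$ is itself a narrowly continuous solution of the same continuity equation with the same initial datum, and then to invoke a uniqueness or comparison principle for the continuity equation under (i)--(ii) (or, as in \citet{ambrosio2008gradient}, a superposition argument). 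As an aside: had your text been submitted as a proof of \Cref{thm:TransportMixtures}, it would match the paper's appendix proof essentially step for step --- the componentwise continuity equation via the affine flow $x \mapsto A_{j,t}x + t a_j$, the change of variables for condition (i), and joint continuity on $[0,1]\times B$ for condition (ii) --- the only difference being that the paper disposes of narrow continuity by citing \citet[Lemma~8.1.2 and Remark~5.1.1]{ambrosio2008gradient} where you argue it directly.
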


\begin{proof}[Proof of \Cref{thm:TransportMixtures}]
	\citet[Lemma~8.1.2 and Remark~5.1.1]{ambrosio2008gradient} show that $(\bP_{t})_{t \in [0,1]}$ is a narrowly continuous family of Borel probability measures.
	We will now show that it solves the continuity equation.
	First note that, by a technical but straightforward calculation, each family $(\rho_{j,t})_{t \in [0,1]}$ solves the continuity equation
	\begin{equation}
		\label{equ:ContinuityEquationForEachJ}
		\partial_t\rho_{j,t} = -\diver(\rho_{j,t}v_{j,t}),
		\qquad
		\rho_{j,0} = \simpd,
		\qquad
		t \in [0,1],\ j=1,\dots,J.
	\end{equation}
	See \Cref{remark:TechnicalContinuityEquationMixtures} for an alternative justification of \eqref{equ:ContinuityEquationForEachJ}.
	Therefore, by linearity of differentiation, the family of probability densities $(\rho_t)_{t \in [0,1]}$ solves the continuity equation
	\[
		\partial_t\rho_t
		=
		\partial_t \Big(\sum_{j=1}^{J}w_j\rho_{j,t}\Big)
		=
		- \diver\Big(\sum_{j=1}^{J}w_j\rho_{j,t}v_{j,t}\Big)
		=
		- \diver(\rho_t v_t),
		\qquad
		\rho_{0} = \simpd,
		\qquad
		t \in [0,1].
	\]
	Now let us verify conditions \ref{item:continuity_equation_technical_assumption_1} and \ref{item:continuity_equation_technical_assumption_2} on the vector field $v_{t}$ from \Cref{thm:continuity_equation_Ambrosio_version}.
	Using the assumption on the first moment of $\simpv$ and the change of variables $z = A_{j,t}^{-1}(x-ta_j)$, we obtain
	\begin{align*}
		\int_{\bR^{d}} \norm{v_{t}(x)} \, \mathrm d\bP_{t}
		&\leq
		\sum_{j=1}^{J} w_j \int_{\bR^{d}} \rho_{j,t}(x) \, v_{j,t}(x)\, \mathrm dx
		\\
		&=
		\sum_{j=1}^{J} w_j \int_{\bR^{d}} \simpd(z) \, \norm{a_{j} + (A_{j}-\Id_{d}) z}\, \mathrm dz
		\\
		&\leq
		\sum_{j=1}^{J} w_j \big( \norm{a_{j}} + \norm{A_{j}-\Id_{d}} M \big).
	\end{align*}
	Since the bound on the right-hand side is independent of $t \in [0,1]$, this proves \ref{item:continuity_equation_technical_assumption_1}.
	In order to prove \ref{item:continuity_equation_technical_assumption_2}, fix a compact subset $B \subseteq \bR^{d}$ and consider the function
	\[
		V\colon [0,1] \times \bR^{d} \to \bR^{d},
		\qquad
		V(t,x) = v_{t}(x),
	\]
	which is continuously differentiable by construction.
	Hence, it attains its maximum on $[0,1]\times B$, i.e. $\hat{V} \defeq \sup_{[0,1]\times B} V < \infty$, and, by the mean value theorem, $\hat{L} \defeq \mathrm{Lip}(V,[0,1]\times B) < \infty$.
	It follows that $\int_{0}^{1} \sup_{B} \norm{v_{t}} + \mathrm{Lip}(v_{t},B) \, \mathrm dt \leq \hat{V}+\hat{L} < \infty$, proving \ref{item:continuity_equation_technical_assumption_2}.
	Since $\rho_{0} = \simpd$ and $\rho_{1} = \trd$, \Cref{thm:continuity_equation_Ambrosio_version} proves the claim.
\end{proof}

\begin{remark}
	\label{remark:TechnicalContinuityEquationMixtures}
	In place of the technical verification of \eqref{equ:ContinuityEquationForEachJ}, we can give the following much more intuitive justification:
	For fixed $j=1,\dots,J$ and $x_0\sim \simpv$, consider the ODE
	\begin{equation}
		\label{equ:ODEforEachJ}
		\dot{x}(t) = a_j + (A_j - \Id_d)x_0,
		\qquad
		x(0) = x_0,
	\end{equation}
	which has the straightforward solution
	\begin{equation}
		\label{equ:SolutionOfODEforEachJ}
		x(t)
		=
		x_0 + t \big( a_j + (A_j-\Id_d)x_0)
		=
		(tA_j + (1-t)\Id_d) x_0 + t a_j
		=
		A_{j,t} x_0 + t a_j.
	\end{equation}
	By \Cref{lemma:LinearChangeOfVariablesForDensities}, $x(t)$ has the probability density $\rho_{j,t}$.
	Note that the right-hand side of \eqref{equ:ODEforEachJ} agrees with $v_{j,t}$ if we rewrite $x_0$ in terms of $x(t)$ via \eqref{equ:SolutionOfODEforEachJ}. Hence, $(\rho_{j,t})_{t \in [0,1]}$ solves \eqref{equ:ContinuityEquationForEachJ}.
\end{remark}
	
	\section*{Acknowledgements}
	\addcontentsline{toc}{section}{Acknowledgements}
	
	IK and TJS have been supported in part by the Deut\-sche For\-schungs\-ge\-mein\-schaft (DFG) through through projects TrU-2 and EF1-10 of the Berlin Mathematics Research Centre MATH+ (EXC-2046/1, project \href{https://gepris.dfg.de/gepris/projekt/390685689}{390685689}).
	TJS has been further supported by the DFG through project \href{https://gepris.dfg.de/gepris/projekt/415980428}{415980428}.
	
	The authors thank Caroline Lasser, Claudia Schillings, Vesa Kaarnioja and Lorenzo Tamellini for helpful and collegial discussions.
	
	Some content of \Cref{section:TransportToMixtures} appeared in the PhD thesis of \citet{Klebanov2016ApproximationOP} but has not been previously submitted to any peer-reviewed journal.
	
	For the purpose of open access, the authors have applied a Creative Commons Attribution (CC BY) licence to any Author Accepted Manuscript version arising.
	
	\bibliographystyle{abbrvnat}
	\bibliography{myBibliography}
	\addcontentsline{toc}{section}{References}
	
\end{document}